\newcommand{\1}[1]{{\mathbf 1}{\{#1\}}}
\newcommand{\eps}{\varepsilon}
\newcommand{\Z}{{\mathbb Z}}
\newcommand{\V}{{\mathcal V}}
\newcommand{\RR}{{\mathcal R}}
\newcommand{\R}{{\mathbb R}}
\newcommand{\Ex}{{\mathcal{E}\!\!\text{\footnotesize{\textsl{x}}}}}
\newcommand{\s}{{\widehat S}}
\newcommand{\hN}{{\widehat N}}
\newcommand{\FFF}{{\mathcal F}}
\let\phi=\varphi
\newcommand{\cov}{{\mathop{\mathrm{cov}}}}
\newcommand{\Var}{{\mathop{\mathrm{Var}}}}
\newcommand{\8}{{\infty}}
\newcommand{\hh}{{\bar h}}
\newcommand{\hq}{{\bar q}}
\newcommand{\IP}{{\mathbb P}}
\newcommand{\IE}{{\mathbb E}}
\newcommand{\hP}{\widehat{P}}
\newcommand{\hH}{{\widehat H}}
\newcommand{\htau}{\widehat{\tau}}
\newcommand{\B}{{\mathsf B}}
\newtheorem{theo}{Theorem}[section]
\newtheorem{lem}[theo]{Lemma}
\newtheorem{df}[theo]{Definition}
\newtheorem{prop}[theo]{Proposition}
\newtheorem{cor}[theo]{Corollary}
\newtheorem{rem}[theo]{Remark}
\title{On the range of a two-dimensional conditioned 
simple random walk}
\author{Nina Gantert$^{1}$ \and
 Serguei~Popov$^{2}$ \and Marina Vachkovskaia$^{2}$}
\begin{document}

\maketitle

{\footnotesize 
\noindent $^{~1}$Technische Universit\"at M\"unchen, 
Fakult\"at f\"ur Mathematik,
Boltzmannstr.\ 3, 85748 Garching,
Germany\\
\noindent e-mail: \texttt{gantert@ma.tum.de}\\

\noindent $^{~2}$Department of Statistics, Institute of Mathematics,
 Statistics and Scientific Computation, University of Campinas --
UNICAMP, rua S\'ergio Buarque de Holanda 651,
13083--859, Campinas SP, Brazil\\
\noindent e-mails: \texttt{\{popov,marinav\}@ime.unicamp.br}

}

\begin{abstract}
We consider the two-dimensional simple random walk
conditioned on never hitting the origin. 
This process is a Markov chain, namely
it is the Doob $h$-transform of the simple random walk
with respect to the potential kernel. 
It is known to be transient and we show that it is 
``almost recurrent'' in the sense that each infinite set 
is visited infinitely often, almost surely.
We prove that, for a ``large'' set, the proportion
of its sites visited by the conditioned walk is 
approximately a Uniform$[0,1]$ random variable.
Also, given a set~$G\subset\R^2$ that does not 
``surround'' the origin,
we prove that a.s.\ there is an infinite number of $k$'s
such that $kG\cap \Z^2$ is unvisited.
These results suggest that the range of the 
conditioned walk has ``fractal'' behavior.
 \\[.3cm]\textbf{Keywords:} random interlacements, range, transience,
 simple random walk, Doob's $h$-transform
\\[.3cm]\textbf{AMS 2010 subject classifications:}
Primary 60J10. Secondary 60G50, 82C41.
 
\end{abstract}

\section{Introduction and results}
\label{s_introres}

We start by introducing some basic notation and defining
the ``conditioned'' random walk~$\s$, the main object
of study in this paper. Besides being interesting
on its own, this random walk is the main ingredient
in the construction of the two-dimensional random
interlacements of~\cite{CP16,CPV16}
(see also \cite{CT12,DRS14,SLT,Szn10} for the 
higher-dimensional case).

Write~$x\sim y$ if $x$ and $y$ are neighbours in~$\Z^2$.
Let~$(S_n, n\geq 0)$ be two-dimensional simple 
random walk, i.e., the discrete-time Markov chain
with state space~$\Z^2$ and transition
probabilities defined in the following way:
\begin{equation}
\label{def_S}
 P_{xy} = \begin{cases}
           \displaystyle\frac{1}{4}, & \text{ if }x\sim y,\\[0.7em]
             0,
              & \text{ otherwise}.
          \end{cases}
\end{equation}
We assume that all random variables in this paper
are constructed on a common probability space with probability
measure~$\IP$ and we denote by ~$\IE$ the corresponding expectation. 
When no confusion can arise, we will write~$\IP_x$
and~$\IE_x$ for the law and expectation
 of the\footnote{the simple one, or the conditioned
one defined below} 
random walk started from~$x$.
Let
\begin{align}
\tau_0(A) &= \inf\{k\geq 0: S_k\in A\} \label{entrance_t},\\
\tau_1(A) &= \inf\{k\geq 1: S_k\in A\} \label{hitting_t}
\end{align}
be the entrance and the hitting time of the set~$A$ by 
simple random walk~$S$ (we use the convention $\inf \emptyset = +\8$). 
For a singleton $A=\{x\}$, we will write
 $\tau_i(A)=\tau_i(x)$, $i=0,1$,
for short. 
One of the key objects needed to understand the two-dimensional
simple random walk is the potential kernel~$a$, defined by 
\begin{equation}
\label{def_a(x)}
a(x) = \sum_{k=0}^\infty\big(\IP_0[S_k\!=\!0]-\IP_x[S_k\!=\!0]\big).
\end{equation}
It can be shown that the above series indeed converges 
and we have~$a(0)=0$, $a(x)>0$ for $x\neq 0$.
It it straightforward to check that 
 the function~$a$ is harmonic outside the origin, i.e.,
\begin{equation}
\label{a_harm}
 \frac{1}{4}\sum_{y: y\sim x}a(y) = a(x) \quad \text{ for all }
 x\neq 0.
\end{equation}
Also, using~\eqref{def_a(x)} and the Markov property,
 one can easily obtain that
$\frac{1}{4}\sum_{x\sim 0}a(x)=1$, which implies by symmetry
that
\begin{equation}
\label{a(x)=1} 
 a(x)=1 \text{ for all }x\sim 0.
\end{equation}

Observe that~\eqref{a_harm} immediately implies 
that $a(S_{k\wedge \tau_0(0)})$ is a martingale, we will
repeatedly use this fact in the sequel.
Further, one can show that
(with $\gamma=0.5772156\dots$ the Euler-Mascheroni constant)
\begin{equation}
\label{formula_for_a}
 a(x) = \frac{2}{\pi}\ln \|x\| 
  + \frac{2\gamma+ 3\ln 2}{\pi} + O(\|x\|^{-2}) 
\end{equation}
as $x\to\infty$, cf.\ Theorem~4.4.4 of~\cite{LL10}.

Let us define another random walk $(\s_n, n\geq 0)$
on~$\Z^2\setminus \{0\}$ in the following way:
its transition probability matrix  
equals (compare to~\eqref{def_S})
\begin{equation}
\label{def_hatS}
 \hP_{xy} = \begin{cases}
           \displaystyle\frac{a(y)}{4a(x)}, & \text{ if }
            x\sim y, x\neq 0,\\
             0,\phantom{\int\limits^A} & \text{ otherwise.}
          \end{cases}
\end{equation}
It is immediate to see from~\eqref{a_harm} 
that the random walk~$\s$
is indeed well defined.

The walk~$\s$ is the Doob $h$-transform
of the simple random walk, under the condition 
of not hitting the origin
(see Lemma~3.3 of~\cite{CPV16} and its proof).
Let $\htau_0, \htau_1$ be defined as in 
\eqref{entrance_t}--\eqref{hitting_t}, but with~$\s$ in the place of~$S$.
We summarize the basic properties of the walk~$\s$
in the following
\begin{prop}
\label{p_basic_prop}
The following statements hold:
\begin{itemize}
 \item[(i)] The walk~$\s$ is reversible, with the reversible
 measure~$\mu_x:=a^2(x)$.
 \item[(ii)] In fact, it can be represented as a random walk
 on the two-dimensional lattice with conductances
 $\big(a(x)a(y), x,y\in \Z^2, x\sim y\big)$.
 \item[(iii)] Let~$\mathcal{N}$ be the set of the four
neighbours of the origin. Then the process
 $1/a(\s_{n\wedge \htau_0(\mathcal{N})})$ is a 
martingale.
 \item[(iv)]
 The walk $\s$ is transient.
 \item[(v)] 
Moreover, for all $x\neq 0$ 
\begin{equation} 
 \label{escape_from_site}
\IP_x\big[\htau_1(x)<\infty\big] = 1-\frac{1}{2a(x)},
\end{equation}
and for all $x\neq y$, $x,y\neq 0$ 
\begin{equation}
\label{not_hit_site}
 \IP_x\big[\htau_0(y)<\infty\big] = 
 \IP_x\big[\htau_1(y)<\infty\big] = \frac{a(x)+a(y)-a(x-y)}{2a(x)}.
\end{equation}
\end{itemize}
\end{prop}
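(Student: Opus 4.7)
Items (i), (ii), (iii) reduce to direct algebraic checks. For (i), detailed balance against $\mu_x = a^2(x)$ is immediate: for $x \sim y$ with $x,y\neq 0$, $\mu_x \hP_{xy} = a^2(x)\cdot\frac{a(y)}{4a(x)} = \frac{a(x)a(y)}{4} = \mu_y \hP_{yx}$. For (ii), I would put conductances $c(x,y)=a(x)a(y)$ on each nearest-neighbour edge; since $a(0)=0$, the four edges incident to the origin carry zero conductance. By \eqref{a_harm}, the total conductance at any $x\neq 0$ is $a(x)\sum_{y\sim x}a(y) = 4a^2(x)$, and the conductance-walk transition probability $a(x)a(y)/(4a^2(x))$ then matches $\hP_{xy}$ in \eqref{def_hatS}. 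For (iii), at any $x\neq 0$ with $x\notin\mathcal{N}$ all four neighbours of $x$ are nonzero, so $\sum_{y\sim x}\hP_{xy}/a(y) = \frac{1}{4a(x)}\sum_{y\sim x}1 = \frac{1}{a(x)}$, which is the martingale identity for $1/a$; stopping at $\htau_0(\mathcal{N})$ keeps the walk on the set where this identity holds.

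For (v), I would prove \eqref{not_hit_site} first. Set $\psi(z) = \tfrac{1}{2}\bigl(a(z)+a(y)-a(z-y)\bigr)$ and $\phi(z) = \psi(z)/a(z)$, with $\phi(0):=0$ (consistent, since $\psi(0)=0$). Using that $a(\cdot)$ is SRW-harmonic off the origin and $a(\cdot-y)$ off $y$, a direct check shows that $\psi$ is SRW-harmonic on $\Z^2\setminus\{0,y\}$, which via (ii) translates to $\hP$-harmonicity of $\phi$ on the same set. Moreover $\phi(y)=1$, $0\leq\phi\leq 1$, and the expansion \eqref{formula_for_a} gives $\phi(z) = O(1/\ln\|z\|)$ as $\|z\|\to\infty$. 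Applying optional stopping to the bounded martingale $\phi(\s_{n\wedge\htau_0(y)\wedge T_R})$ (where $T_R$ is the exit time of the ball $B(R)$) and letting first $n\to\infty$ and then $R\to\infty$ produces $\phi(x) = \IP_x[\htau_0(y)<\infty]$, which is \eqref{not_hit_site}. Equation \eqref{escape_from_site} then follows by a one-step decomposition: for $y\sim x$ we have $a(y-x)=1$ by \eqref{a(x)=1}, so \eqref{not_hit_site} gives $\IP_y[\htau_0(x)<\infty] = \bigl(a(y)+a(x)-1\bigr)/(2a(y))$; substituting into $\IP_x[\htau_1(x)<\infty] = \sum_{y\sim x,\,y\neq 0}\hP_{xy}\,\IP_y[\htau_0(x)<\infty]$ and simplifying via \eqref{a_harm} yields $1 - 1/(2a(x))$, both for $x\notin\mathcal{N}$ and, using $a(x)=1$ and $a(0)=0$, for $x\in\mathcal{N}$.

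Finally, (iv) will follow at once from \eqref{escape_from_site}: since $\IP_x[\htau_1(x)<\infty]<1$ for every $x\neq 0$, each state is visited only finitely often almost surely, so $\s$ is transient. The main delicate step is the passage $R\to\infty$ in the optional-stopping argument for \eqref{not_hit_site}: one must verify, via \eqref{formula_for_a}, that $\phi$ is uniformly $O(1/\ln R)$ on $\partial B(R)$, so that $\IE_x[\phi(\s_{T_R})\1{T_R<\htau_0(y)}]\to 0$. A second small but essential observation is that $\psi(0)=0$; this is precisely what lets SRW-harmonicity of $\psi$ pass through to $\hP$-harmonicity of $\phi$ at sites in $\mathcal{N}$, where the ``neighbour'' at the origin has been removed in the conductance picture.
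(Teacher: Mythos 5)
Your arguments for (i)--(iii) are the same direct computations the paper leaves to the reader, and they are correct (the observation that $a(0)=0$ makes the edges at the origin carry zero conductance, and that $\psi(0)=0$, is exactly the kind of detail one has to notice). For (iv) and (v), however, you take a genuinely different route. The paper proves (iv) independently of (v), from the martingale in (iii) together with a Lyapunov-function transience criterion (Theorem~2.5.8 of~\cite{MPW17}); and it proves~\eqref{not_hit_site} by expressing $\hP$-hitting probabilities through SRW hitting probabilities conditioned on $\{\tau_1(R)<\tau_1(0)\}$ via Lemma~\ref{l_relation_S_hatS}, introducing the quantities $h_1,h_2,q_{12},q_{21},p_1,p_2$, solving a $2\times 2$ linear system, and then passing $R\to\infty$. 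You instead build the $\hP$-harmonic function $\phi(z)=\frac{a(z)+a(y)-a(z-y)}{2a(z)}$ directly, verify $\phi(y)=1$ and $\phi(z)=O(1/\ln\|z\|)$, and read off~\eqref{not_hit_site} from optional stopping for the stopped bounded martingale $\phi(\s_{\cdot\wedge\htau_0(y)\wedge T_R})$; transience then follows from~\eqref{escape_from_site} by the elementary return-probability criterion. Your approach is more self-contained (it bypasses both Lemma~\ref{l_relation_S_hatS} and the external Lyapunov criterion), at the price of having to guess the harmonic function and to check its boundary behaviour; the paper's route is more computational but only uses the already-established comparison between $\s$ and conditioned SRW. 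Both are valid; the key checks in your version (that $\hP$-harmonicity of $\phi$ off $\{0,y\}$ survives at the neighbours of the origin because $\psi(0)=0$, and that $\phi$ vanishes logarithmically at infinity so the escape term dies as $R\to\infty$) are correctly identified and go through.
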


The statements of Proposition~\ref{p_basic_prop} are not 
novel (they appear already in~\cite{CPV16}), but 
we found it useful to collect them here for the sake
of completeness and also for future reference.
 We will prove Proposition~\ref{p_basic_prop} in the next section.
It is curious to observe that~\eqref{not_hit_site} implies
that, for any~$x$,
 $\IP_x[\htau_1(y)<\infty]$ converges to $\frac{1}{2}$
as $y\to \infty$. As noted in~\cite{CPV16}, this is related
to the remarkable fact that if one conditions on a very
distant site being vacant, then this reduces the intensity
``near the origin'' of the two-dimensional
 random interlacement process by a factor of four. 

Let~$\|\cdot\|$ be the Euclidean norm. Define the (discrete)
ball
\[
 \B(x,r) = \{y\in \Z^2: \|y-x\|\leq r\}
\]
(note that this definition works for \emph{all} 
$x\in\R^2$ and $r\in\R_+$),
and
abbreviate $\B(r):=\B(0,r)$.
The (internal) boundary of $A\subset\Z^2$ is defined by
\[
 \partial A = \{x\in A: \text{there exists }y\in \Z^2\setminus A
 \text{ such that }x\sim y\}.
\]

Now we introduce some more notation and state the main results.
For a set $T\subset\Z_+$ (thought of as a set of time moments) let 
\[
\s_T=\bigcup_{m\in T}\big\{\s_m\big\}
\]
be the \emph{range} of the walk~$\s$ with respect to that set.
For simplicity,
we assume in the following that the walk~$\s$ starts 
at a fixed neighbour~$x_0$ of the origin, 
and we write $\IP$ for $\IP_{x_0}$
(it is, however, clear that our results hold for any
fixed starting position of the walk).
For a nonempty and finite set $A\subset \Z^2$, let us consider random variables
\begin{align*} 
 \RR(A) &= \frac{\big|A\cap \s_{[0,\infty)}\big|}{|A|} ,\\
 \V(A) &= \frac{\big|A\setminus \s_{[0,\infty)}\big|}{|A|}
 = 1-\RR(A);
\end{align*}
that is, $\RR(A)$ (respectively, $\V(A)$) is the proportion 
of visited (respectively, unvisited) sites of~$A$ by the walk~$\s$.
Let us also abbreviate, for $M_0 > 0$,
\begin{equation}
\label{df_ell_A}
 \ell^{(n)}_A=  |A|^{-1}
 \max_{y\in A} 
\big|A\cap \B\big(y,{\textstyle\frac{n}{\ln^{M_0}n}}\big)\big|.
\end{equation}
Our main result is the following
\begin{theo}
\label{t_main_res}
Let $M_0>0$ be a fixed constant, and
assume that $A\subset \B(n)\setminus \B(n\ln^{-M_0}n)$.
Then, for all $s\in [0,1]$, we have,
with positive constants $c_{1,2}$ depending only on~$M_0$,
\begin{equation}
\label{main_res}
 \big|\IP[\V(A)\leq s] - s\big| 
\leq c_1\Big(\frac{\ln\ln n}{\ln n}\Big)^{1/3}
  + c_2\ell^{(n)}_A \Big(\frac{\ln\ln n}{\ln n}\Big)^{-2/3},
\end{equation}
and the same result holds with~$\RR$ on the place of~$\V$.
\end{theo}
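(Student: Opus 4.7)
The plan is to realize $\V(A)$ as approximately $e^{-\xi}$ with $\xi\sim\mathrm{Exp}(1)$, so that $\V(A)$ is approximately uniform on $[0,1]$ (since $\IP[e^{-\xi}\le s]=\IP[\xi\ge -\ln s]=s$). The exponential variable will arise from an excursion count, and the approximation $\V(A)\approx e^{-\xi}$ from a second-moment concentration bound. Concretely, I fix two scales sandwiching~$A$, say $r_-=n\ln^{-M_0-2}n$ and $r_+=n\ln^{M_0+2}n$, and decompose the trajectory of~$\s$ into successive excursions between $\partial\B(r_+)$ and $\partial\B(r_-)$. Letting $N$ be the number of such excursions before $\s$ escapes to~$\8$, I use Proposition~\ref{p_basic_prop}(iii) together with the asymptotic~\eqref{formula_for_a} to show, via optional stopping on the martingale $1/a(\s_{\cdot\wedge\htau_0(\mathcal{N})})$, that from $y\in\partial\B(r_+)$ the probability of ever reaching $\B(r_-)$ is $\frac{\ln r_-}{\ln r_+}(1+o(1))$. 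Hence $N$ is stochastically close to a geometric random variable with parameter $p_*=\frac{\ln(r_+/r_-)}{\ln r_+}\asymp \frac{\ln\ln n}{\ln n}$, and $p_*N$ converges in distribution, at a quantitative rate, to Exp$(1)$.

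Next, for each $y\in A$ I estimate the probability $\pi_y$ that a single excursion visits~$y$. Applying~\eqref{not_hit_site} from the entrance point on $\partial\B(r_-)$ (whose distribution is, to leading order, proportional to the uniform harmonic measure on that circle), one obtains $\pi_y=p_*(1+o(1))$ uniformly in $y\in A$. Thus the scales $r_\pm$ calibrate so that the per-excursion hitting probability matches the per-excursion escape probability; this calibration is what makes the exponential law~$\xi$ yield a genuinely uniform limit (rather than a power of a uniform). Given these two ingredients, the conditional mean of $\V(A)$ given $N$ is $(1-p_*)^N\approx e^{-p_*N}\to e^{-\xi}\sim\mathrm{Unif}[0,1]$, establishing~\eqref{main_res} at the qualitative level, and the bound on $\V$ transfers to $\RR=1-\V$ automatically.

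To get the quantitative statement, I need concentration of $\V(A)$ around the conditional mean $(1-p_*)^N$. I argue by a second-moment estimate: for distinct $y,z\in A$, the covariance $\IP[y,z\in\s_{[0,\8)}]-\IP[y\in\s_{[0,\8)}]\IP[z\in\s_{[0,\8)}]$ can be controlled with~\eqref{not_hit_site} applied to the two-point problem. Pairs at mutual distance $\ge n/\ln^{M_0}n$ contribute $o(1)$ times the product of marginals, while pairs at distance $<n/\ln^{M_0}n$ are bounded crudely by the marginals and counted using $\ell^{(n)}_A$. Summing gives a variance bound proportional to $|A|^2\ell^{(n)}_A$, which produces the second term in~\eqref{main_res}; balancing this concentration error against the Berry--Esseen-type error from the first step (by optimising the annulus width $\ln(r_+/r_-)$) yields the exponents $\pm 1/3$.

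The main obstacle is precisely the concentration step. Because the two-dimensional potential kernel is logarithmic, two sites at distance $d\ll n$ have strongly correlated visit events (the pairwise correlation is of order $\ln(n/d)/\ln n$, not $o(1)$), so clumping of~$A$ at scales $\le n/\ln^{M_0}n$ genuinely destroys independence. The parameter $\ell^{(n)}_A$ defined in~\eqref{df_ell_A} is exactly the sharp price of this clumping in the variance computation, and the quantitative form of the optional-stopping estimate for the martingale in Proposition~\ref{p_basic_prop}(iii) must be delicate enough to let the calibration $\pi_y/p_*=1+o(1)$ survive against the $(\ln\ln n/\ln n)^{1/3}$ target rate.
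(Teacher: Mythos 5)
Your overall skeleton matches the paper's: decompose into annular excursions, show the excursion count is nearly geometric so that (rate)$\times$(count) is nearly Exponential$(1)$, estimate the per-excursion hit probability, get concentration of $\V^{(k)}$ around its mean by a second-moment/Chebyshev bound that isolates near pairs via $\ell^{(n)}_A$, and balance the two error sources to obtain the $\pm 1/3$ exponents. However, there is a genuine gap in the calibration step, which is exactly the step you flag as being ``what makes the exponential law yield a genuinely uniform limit.''

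With your scales $r_-=n\ln^{-M_0-2}n$, $r_+=n\ln^{M_0+2}n$, the set $A$ sits in the \emph{middle} of the annulus, and the calibration $\pi_y=p_*(1+o(1))$ fails by a factor of~$2$. Writing $L=\frac{2}{\pi}\ln n+\frac{2\gamma+3\ln 2}{\pi}$ and $\eps=\frac{2}{\pi}(M_0+2)\ln\ln n$, one has $a(r_-)\approx L-\eps$, $a(n)\approx L$, $a(r_+)\approx L+\eps$, and crucially for $x\in\partial\B(r_-)$ and $y\in A$ the distance $\|x-y\|\approx\|y\|$, so $a(x-y)\approx a(y)\approx L$. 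Plugging into the conditioned hitting formula (the analogue of~\eqref{gen_exc_hit}, which you must use here --- \eqref{not_hit_site} gives the \emph{unconditional} hit probability $\IP_x[\htau_1(y)<\infty]\approx\frac12$, not the per-excursion one) gives $\pi_y\approx\frac{\eps}{L}=\frac{(M_0+2)\ln\ln n}{\ln n}$, while the escape probability is $p_*=1-\frac{a(r_-)}{a(r_+)}\approx\frac{2\eps}{L}=\frac{(2M_0+4)\ln\ln n}{\ln n}$. Hence $\pi_y/p_*\to\frac12$, and your argument would produce $\IP[\V(A)\le s]\to s^{p_*/\pi_y}=s^2$, not $s$. (Note this is also consistent with the $\frac12$ in~\eqref{not_hit_site}: $\IE[1-(1-\pi_y)^N]\approx\frac{1}{1+p_*/\pi_y}$, which equals $\frac12$ exactly when $\pi_y=p_*$.) The paper's choice of excursions between $\partial\B(n\ln n)$ and $\partial\B(n\ln^2 n)$ with $A\subset\B(n)$ strictly \emph{inside the inner circle} is not cosmetic: it forces $\|x-y\|\approx\|x\|$ for $x$ on the inner circle, so $a(x-y)\approx a(x)$, and the same algebra then gives $\pi_y\approx\frac{\eps'}{L'}\approx\frac{\ln\ln n}{\ln n}\approx p_*$, i.e.\ calibration $\pi_y/p_*\to 1$. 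It also cleanly accounts for \emph{all} visits to~$A$, since the return leg from $\partial\B(n\ln^2 n)$ terminates upon hitting $\partial\B(n\ln n)$ before entering the region containing $A$; in your geometry the walk can also visit~$A$ on the inward legs, which your excursion count ignores. Finally, a minor point: the $1/3$ exponent in the paper does not come from tuning the annulus width $\ln(r_+/r_-)$, but from choosing the tolerance $\delta_n$ in the bracketing $k_n^-\le\Phi^{(s)}\le k_n^+$; the annulus width is essentially fixed at order $\ln\ln n$ by the requirement that $N\asymp\frac{\ln n}{\ln\ln n}$ be polylogarithmically large so the concentration step works.
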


The above result means that 
if~$A\subset \B(n)\setminus \B(n\ln^{-M_0}n)$
is ``big enough and well distributed'', then the proportion 
of visited sites has approximately Uniform$[0,1]$ distribution.
In particular, one can obtain the following
\begin{cor}
\label{cor_open}
Assume that $D\subset \R^2$
is a bounded open set.
Then
both sequences $(\RR(nD\cap\Z^2),n\geq 1)$ 
and $(\V(nD\cap\Z^2),n\geq 1)$
converge in distribution to the Uniform$[0,1]$
random variable.
\end{cor}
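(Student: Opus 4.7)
The plan is to deduce the corollary from Theorem~\ref{t_main_res} via a direct localization argument. Fix any $M_0>\frac{1}{3}$ (say $M_0=1$), assume $D$ is nonempty, and put $A_n=nD\cap\Z^2$. Pick $R>0$ with $D\subset\B(0,R)$ and set $n^*=\lceil nR\rceil$, so that $A_n\subset\B(n^*)$. Since $D$ is open, it contains a nondegenerate Euclidean disk, hence $|A_n|\geq c_0 n^2$ for some $c_0>0$ and all $n$ large enough.

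To match the hypothesis of Theorem~\ref{t_main_res}, I would work with the trimmed set $A'_n=A_n\setminus\B(n^*\ln^{-M_0}n^*)$, which by construction lies in $\B(n^*)\setminus\B(n^*\ln^{-M_0}n^*)$. The number of removed sites is at most $|\B(n^*\ln^{-M_0}n^*)|=O(n^2\ln^{-2M_0}n)=o(|A_n|)$, and a direct comparison gives
\[
\big|\V(A_n)-\V(A'_n)\big|\leq \frac{2|A_n\setminus A'_n|}{|A_n|}=O(\ln^{-2M_0}n),
\]
together with the analogous bound for $\RR$. Consequently $\V(A_n)$ and $\V(A'_n)$ share any distributional limit.

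I would then apply Theorem~\ref{t_main_res} to $A'_n$ with parameter $n^*$ in place of $n$. The first error term in~\eqref{main_res} tends to $0$, and the trivial estimate $|A'_n\cap\B(y,n^*/\ln^{M_0}n^*)|\leq |\B(n^*/\ln^{M_0}n^*)|=O(n^2\ln^{-2M_0}n)$, combined with $|A'_n|\geq\tfrac{c_0}{2}n^2$, yields $\ell^{(n^*)}_{A'_n}=O(\ln^{-2M_0}n)$, so that
\[
\ell^{(n^*)}_{A'_n}\cdot(\ln\ln n^*/\ln n^*)^{-2/3}=O\big((\ln n)^{2/3-2M_0}(\ln\ln n)^{-2/3}\big)=o(1)
\]
precisely because $M_0>\frac{1}{3}$. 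Hence $\IP[\V(A'_n)\leq s]\to s$ for every $s\in[0,1]$, and the trimming comparison lifts this to $\V(A_n)$. The statement for $\RR$ follows from the corresponding assertion in Theorem~\ref{t_main_res} (or from $\RR=1-\V$ and the symmetry of Uniform$[0,1]$). The only substantive point, which I expect to be the main obstacle, is the control of $\ell^{(n^*)}_{A'_n}$: it works because $D$ has diameter $O(1)$, so a ball of radius $n^*/\ln^{M_0}n^*$ captures only an $O(\ln^{-2M_0}n)$ fraction of the $\asymp n^2$ sites of~$A_n$.
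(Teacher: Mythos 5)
Your proposal is correct and follows essentially the same route as the paper: trim out $\B(n^*\ln^{-M_0}n^*)$, note that $|nD\cap\Z^2|\asymp n^2$ (since $D$ is open, hence contains a disk) forces $\ell^{(n^*)}_{A'_n}=O(\ln^{-2M_0}n)$, and then choose $M_0$ large enough that the right-hand side of~\eqref{main_res} vanishes. You merely make explicit the threshold $M_0>\tfrac13$ that the paper leaves as ``choose $M_0$ large enough,'' and you spell out the harmless trimming comparison; the reasoning is identical.
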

Indeed, it is straightforward to obtain it from 
Theorem~\ref{t_main_res}
 since $|nD\cap\Z^2|$ is of order $n^2$
as $n\to\infty$ (note that~$D$ contains a disk), and so
 $\ell^{(n)}_{nD\cap\Z^2}$ will be of 
order~$\ln^{-2M_0}n$. 
Observe that we can cut out $\B(n\ln^{-M_0}n)$ from~$nD$ 
without doing any harm to the
limit theorem, since formally we need
$A\subset \B(n)\setminus \B(n\ln^{-M_0}n)$
 in order to apply Theorem~\ref{t_main_res}.
Then, we can choose~$M_0$ large enough such that 
the right-hand side of~\eqref{main_res} goes to~$0$.

Also, we prove that the range of~$\s$ contains many
``big holes''. 
To formulate this result, we need the following
\begin{df}
\label{df_surround}
 We say that a set $G\subset \R^2$ \emph{does not surround
 the origin}, if
\begin{itemize}
 \item there exists $c_1>0$ such that $G\subset\B(c_1)$,
 i.e., $G$ is bounded;
 \item there exist $c_2>0$, $c_3>0$, and a function 
 $f=(f_1,f_2): [0,1]\mapsto \R^2$
such that $f(0)=0$, $\|f(1)\|= c_1$, 
$|f'_1(s)|+|f'_2(s)|\leq c_2$
for all $s\in [0,1]$, and
\[
 \inf_{s\in[0,1], y\in G} \|(f_1(s),f_2(s))-y\|\geq c_3,
\]
i.e., one can escape from the origin to infinity 
along a path which is uniformly away from~$G$.
\end{itemize}
\end{df}

Then, we have
\begin{theo}
\label{t_bigholes}
 Let $G\subset \R^2$ be a set that does not
 surround the origin. Then,
\begin{equation}
\label{eq_bigholes}
 \IP\big[nG\cap \s_{[0,\infty)} = \emptyset 
\text{ for infinitely many }n\big] = 1. 
\end{equation}
\end{theo}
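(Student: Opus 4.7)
The plan is to split the argument into a single-scale lower bound $\IP[A_n] \geq c/\ln n$ (where $A_n := \{nG \cap \s_{[0,\infty)} = \emptyset\}$), and then to upgrade it to ``infinitely often'' via a Borel--Cantelli argument across a suitably growing sequence of scales.

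For the single-scale bound, I would set $X_n := |nG \cap \Z^2 \cap \s_{[0,\infty)}|$ and use the identity
\[
\IP[A_n] = \IP[X_n = 0] = \frac{\IE[X_n \mid X_n \geq 1] - \IE X_n}{\IE[X_n \mid X_n \geq 1]},
\]
which reduces the problem to the \emph{comparison} of two explicit expectations. Both are computable via~\eqref{not_hit_site}: the first equals $\sum_{y \in nG \cap \Z^2} \frac{1+a(y)-a(x_0-y)}{2} = \frac{1}{2}|nG \cap \Z^2| + O(n)$ by~\eqref{formula_for_a}; the second, by strong Markov at the first-hit location $Y^* := \s_{\htau_0(nG \cap \Z^2)}$, equals $\frac{1}{2}|nG \cap \Z^2| + \frac{1}{\pi a(Y^*)}\IE\big[\sum_{z \in nG \cap \Z^2}\ln(|z|/|Y^*-z|) \,\big|\, X_n \geq 1\big] + O(1)$. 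The crux is to show that the last conditional expectation is bounded below by $c|nG \cap \Z^2|$ with $c > 0$, giving $\IP[A_n] \geq c'/\ln n$. The non-surrounding hypothesis enters through the escape path $f$ of Definition~\ref{df_surround}: it forces the first-hit distribution to be concentrated on the ``inward face'' of $nG$ where the logarithmic ratios $\ln(|z|/|Y^*-z|)$ are positive on average.

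For the upgrade to ``infinitely many $n$'', I would take $n_k = 2^k$. The non-surrounding assumption at $s=0$ gives $G \cap \B(c_3) = \emptyset$, hence $n_k G \cap \B(c_3 n_k) = \emptyset$; so $\s$ cannot have hit $n_k G$ before the stopping time $T_k := \inf\{m : \|\s_m\| \geq c_3 n_k/2\}$, which is finite a.s.\ by Proposition~\ref{p_basic_prop}(iv). The strong Markov property combined with the single-scale bound applied from $\s_{T_k}$ (which lies at distance $\Theta(n_k)$ from $n_k G$ and hence is in the range covered by the Stage~1 analysis) yields
\[
\IP[A_{n_k} \mid \FFF_{T_k}] \geq \frac{c''}{\ln n_k} \quad \text{a.s.}
\]
Since $\sum_k 1/\ln n_k = \sum_k (k \ln 2)^{-1}$ diverges, a conditional Borel--Cantelli argument (with a careful treatment of adaptedness via approximations $\tilde A_k := \{n_k G \cap \s_{[T_k,T_{k+1}]} = \emptyset\} \supseteq A_{n_k}$, which are $\FFF_{T_{k+1}}$-measurable, plus a separate bound on the tail contribution ``$n_k G$ hit after time $T_{k+1}$'' using~\eqref{not_hit_site}) yields $\IP[A_{n_k} \text{ i.o.}] = 1$.

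The main obstacle is the single-scale lower bound. Because $\s$ is ``almost recurrent'' at every scale, returning to $\B(n)$ roughly $\ln n$ times before escaping for good, the probability of avoiding $nG$ throughout these returns is delicate to estimate by a direct escape-path construction. The size-biasing identity replaces this by an algebraic comparison of hitting-probability sums, but the decisive step is extracting the positive sign and correct order ($c|nG \cap \Z^2|$) of $\IE\big[\sum_z \ln(|z|/|Y^*-z|) \mid X_n \geq 1\big]$ from the geometry of the first-hit distribution on $nG$. It is precisely here that the non-surrounding assumption of Definition~\ref{df_surround} is indispensable: were $G$ to surround the origin, the first hit would be forced onto a boundary of the opposite orientation and the sign would reverse, collapsing $\IP[A_n]$ to zero in agreement with topology.
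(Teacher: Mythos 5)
Your overall skeleton (single-scale lower bound $\IP[A_n]\geq c/\ln n$, then a Borel--Cantelli upgrade across a geometric sequence of scales) matches the paper's, but both steps have genuine gaps, and the paper executes both differently. For the single-scale bound, you reduce to the estimate $\IE\big[\sum_{z\in nG\cap\Z^2}\ln(\|z\|/\|Y^*-z\|)\mid X_n\geq 1\big]\geq c|nG\cap\Z^2|$ but never prove it; the remark that the non-surrounding path $f$ ``forces the first-hit distribution onto the inward face'' is not an argument, and the sign of this sum is genuinely delicate (for the surrounding annulus $\B(1)\setminus\B(1/2)$ it is $\leq 0$ for every $Y^*$, with equality only on the inner circle, so the positivity you need really does hinge on geometry that must be extracted carefully). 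The paper uses the escape path $f$ far more directly: by comparison with Brownian motion, with probability bounded below by a constant the walk follows a tube around $nf$ from $\partial\B(2^{3(n-1)})$ to $\partial\B(2^{3n})$ without touching $nG$, and then never returns to $\B(2^{3n-1})$ with probability $\asymp 1/\ln(2^{3n})$ by~\eqref{escape_condS}. This is shorter and avoids the moment computation entirely, while isolating exactly where the hypothesis of Definition~\ref{df_surround} is used.

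The upgrade step has a more serious flaw. With $n_k=2^k$ and $T_k=\inf\{m:\|\s_m\|\geq c_3 n_k/2\}$, your adapted approximations $\tilde A_k\supseteq A_{n_k}$ satisfy a conditional second Borel--Cantelli and give $\tilde A_k$ i.o.\ a.s., but $\IP[\tilde A_k\setminus A_{n_k}]$ does \emph{not} tend to $0$: after time $T_{k+1}$ the walk is at distance $\asymp n_{k+1}=2n_k$ from the origin, and by~\eqref{escape_condS} it returns to scale $n_k$ with probability $1-O(1/\ln n_k)\to 1$, so the ``tail contribution'' you propose to bound is in fact close to $1$. Worse, for \emph{any} sequence with $\sum_k 1/\ln n_k=\infty$ one automatically has $\ln n_{k+1}/\ln n_k\to 1$, so this obstruction cannot be removed by tuning the scales. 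The paper circumvents this in two ways that you do not use: it builds the ``never return to $\B(2^{3n-1})$'' condition into the events $E_n$ themselves (so that $E_n\subset A_{2^{3n-1}}$ outright), applies the Kochen--Stone lemma~\eqref{Kochen-Stone} which tolerates the resulting positive correlations (with $\IP[E_m\cap E_n]=O\big(((n-m+1)m)^{-1}\big)$), and then, crucially, invokes the Liouville property (Theorem~\ref{t_Liouville}) to pass from $\IP[E_n\text{ i.o.}]>0$ to $\IP[E_n\text{ i.o.}]=1$. You do not mention any $0$--$1$ law, and without one even a correct positive-probability bound would not finish the proof.
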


Theorem~\ref{t_bigholes} invites the following
\begin{rem}
\label{rem_filled}
A natural question to ask is whether there are also ``big''
\emph{completely filled} subsets 
of~$\Z^2$, that is, 
if a.s.\ there are infinitely many~$n$ such that
$(nG\cap \Z^2)\subset \s_{[0,\infty)}$, for $G\subset\R^2$ 
being, say, a disk. It is not difficult to see that the answer 
to this question is ``no''. We do not give all details,
but the reason for this is that, informally, one $\s$-trajectory
corresponds to the two-dimensional random interlacements
of~\cite{CPV16} ``just above'' the level $\alpha=0$.
Then, as in Theorem~2.5~(iii) (inequality~(22)) of~\cite{CPV16},
it is possible to show that, with \emph{any} fixed $\delta>0$,
\[
 \IP\big[(nG\cap \Z^2)\subset \s_{[0,\infty)}\big]
     \leq n^{-2+\delta}
\]
for all large enough~$n$; our claim then follows from the 
(first) Borel-Cantelli lemma.
\end{rem}

We also establish some additional properties
of the conditioned walk~$\s$, which will be important
for the proof of Theorem~\ref{t_bigholes} and 
are of independent interest. Consider an irreducible Markov chain.
Recall that a set is called
\emph{recurrent} with respect to the Markov chain, if it is
visited infinitely many times almost surely; a set is called \emph{transient},
if it is visited only finitely many times almost surely.
 It is clear that
any nonempty set is recurrent
with respect to a recurrent Markov chain, and every finite
set is transient with respect to a transient Markov chain.
Note that, in general, a set can be neither recurrent nor 
transient --- think e.g.\ of the simple random walk on a binary
tree, fix a neighbour of the root and consider the set of vertices of the tree connected to the root
through this fixed neighbour.

In many situations it is possible to characterize completely
the recurrent and transient sets, as well as to answer the question
if any set must be either recurrent or transient. For example,
for the simple random walk in~$\Z^d$, $d\geq 3$, each set is either recurrent or transient and
the characterization
is provided by the \emph{Wiener's test} 
(see e.g.\ Corollary~6.5.9 of~\cite{LL10}), 
formulated in terms of capacities of
intersections of the set with exponentially growing annuli.
Now, for the conditioned two-dimensional walk~$\s$ the characterization of recurrent and transient sets
is particularly simple:
\begin{theo}
\label{t_rec_trans}
A set 
$A\subset\Z^2$ is recurrent with respect to~$\s$
if and only if~$A$ is infinite. 
\end{theo}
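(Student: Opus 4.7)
The ``only if'' direction is immediate from Proposition~\ref{p_basic_prop}(iv): since $\s$ is transient, any finite set is visited only finitely often a.s., hence is not recurrent. The content lies in showing that every \emph{infinite} $A\subset\Z^2$ is visited infinitely often a.s.

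My plan is to apply a second-moment (Chung-Erd\H{o}s) argument along a sparse subsequence of $A$, leveraging the fact highlighted right after Proposition~\ref{p_basic_prop} that $\IP_x[\htau_0(y)<\infty]\to \tfrac12$ as $y\to\infty$. Since $A$ is infinite and $\Z^2$ is locally finite, I can extract $y_1,y_2,\ldots\in A$ with $\|y_{k+1}\|\ge\|y_k\|^2\ge 4$, and put $E_k=\{\htau_0(y_k)<\infty\}$. Combining~\eqref{not_hit_site}, \eqref{formula_for_a} and $a(x_0)=1$ (from~\eqref{a(x)=1}), one gets $\IP[E_k]=\tfrac12+o(1)$, because $a(y_k)-a(x_0-y_k)=O(\|y_k\|^{-1})$ while $a(y_k)\to\infty$.

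For the pairwise correlations, the strong Markov property at $\htau_0(y_j)$ and at $\htau_0(y_k)$ will give, for $j<k$,
\[
\IP[E_j\cap E_k]\le \IP[\htau_0(y_j)<\infty]\,\IP_{y_j}[\htau_0(y_k)<\infty]+\IP[\htau_0(y_k)<\infty]\,\IP_{y_k}[\htau_0(y_j)<\infty].
\]
The first summand tends to $\tfrac14$, since $\|y_k\|\gg\|y_j\|$ forces $a(y_j)-a(y_j-y_k)\to 0$; \eqref{not_hit_site} bounds the second by $\tfrac12\cdot\tfrac{a(y_j)}{2a(y_k)}+o(1)$, which is geometrically small in $k-j$ thanks to the doubly exponential growth of $\|y_k\|$. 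Hence, for any fixed $N$, $\sum_{N\le j,k\le n}\IP[E_j\cap E_k]=\tfrac14(n-N)^2(1+o(1))$ while $\bigl(\sum_{N\le k\le n}\IP[E_k]\bigr)^2\sim\tfrac14(n-N)^2$.

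The Chung-Erd\H{o}s inequality $\IP\bigl[\bigcup_{N\le k\le n}E_k\bigr]\ge\frac{(\sum_{N\le k\le n}\IP[E_k])^2}{\sum_{N\le j,k\le n}\IP[E_j\cap E_k]}$, with $n\to\infty$ and then $N\to\infty$, then yields $\IP\bigl[\bigcup_{k\ge N}E_k\bigr]=1$ for every $N$, i.e.\ $\IP[E_k\text{ i.o.}]=1$; since $\{y_k\}\subset A$, this means $A$ is visited infinitely often a.s. The main technical point is the uniform control of the error terms in the expansion~\eqref{formula_for_a} along the sparse sequence, which the choice $\|y_{k+1}\|\ge\|y_k\|^2$ makes routine.
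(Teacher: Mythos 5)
Your proof is correct, and it takes a genuinely different route from the paper's. The paper argues by constructing a sequence of nested balls $\B(R_0)\subset\B(R_1)\subset\cdots$ and points $y_k\in A\cap(\B(R_k)\setminus\B(R_{k-1}))$ chosen, via~\eqref{lim1/2}, so that from \emph{any} $x\in\partial\B(R_{k-1})$ the walk hits $y_k$ before $\partial\B(R_k)$ with probability at least $\tfrac13$. This gives a uniform conditional lower bound ``regardless of the past'', so the visits to $A$ dominate an i.i.d.\ Bernoulli($\tfrac13$) sequence and Borel--Cantelli II (in its trivial independent form) finishes the proof. You instead fix a doubly-exponentially sparse subsequence $(y_k)\subset A$ once and for all, and run a second-moment (Chung--Erd\H{o}s/Kochen--Stone) argument, controlling $\IP[E_j\cap E_k]$ via the strong Markov decomposition together with~\eqref{not_hit_site} and~\eqref{formula_for_a}. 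Both proofs hinge on the same input, namely that $\IP_x[\htau_1(y)<\infty]\to\tfrac12$ and that the ``cross'' hitting probabilities $\IP_{y_j}[\htau_1(y_k)<\infty]$ and $\IP_{y_k}[\htau_1(y_j)<\infty]$ behave as they should for well-separated $y_j,y_k$. The paper's nested-annuli construction is slightly more elementary (no second moment) and yields something a bit stronger, namely stochastic domination of the hit indicators by an i.i.d.\ sequence; your version is more in the spirit of the paper's own proof of Theorem~\ref{t_bigholes}, which also uses Kochen--Stone, so it has the merit of methodological economy. One small bookkeeping point worth making explicit in a write-up: for fixed $N$ the Chung--Erd\H{o}s ratio only tends to $1$ as $n\to\infty$, so you conclude $\IP[\bigcup_{k\ge N}E_k]=1$ for each $N$, and then $\IP[E_k\ \text{i.o.}]=1$ by monotone continuity; this is exactly what you indicated and it is correct.
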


Next, we recall that a Markov chain
has the \emph{Liouville property}, 
see e.g.\ Chapter~IV of~\cite{Woess09}, 
if all bounded harmonic
(with respect to that Markov chain) functions are constants.
Since Theorem~\ref{t_rec_trans} implies that every
set must be recurrent or transient, we obtain the following
result as its corollary:
\begin{theo}
\label{t_Liouville}
 The conditioned two-dimensional walk~$\s$ has the Liouville property.
\end{theo}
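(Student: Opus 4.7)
The plan is to deduce the Liouville property directly from the recurrence/transience dichotomy in Theorem~\ref{t_rec_trans}, combined with the fact that $h(\s_n)$ is a bounded martingale whenever $h$ is bounded and harmonic. The key observation is that for $\s$ there is no room for bounded harmonic functions to oscillate at infinity, because every level set is either visited i.o.\ or not at all.

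Concretely, let $h:\Z^2\setminus\{0\}\to\R$ be bounded and harmonic for~$\s$. Then $M_n:=h(\s_n)$ is a bounded martingale (with respect to the natural filtration of $\s$ under~$\IP_x$ for any starting point $x$), so $M_n$ converges $\IP_x$-a.s.\ to some limit $M_\infty$. For each $c\in\Q$, apply Theorem~\ref{t_rec_trans} to the level set $L_c:=\{y\in\Z^2\setminus\{0\}:h(y)\leq c\}$: either $L_c$ is finite and, by transience (Proposition~\ref{p_basic_prop}(iv)), visited only finitely often, which forces $\liminf_n h(\s_n)\geq c$ a.s.; or $L_c$ is infinite and recurrent, which forces $\liminf_n h(\s_n)\leq c$ a.s. Crucially, the dichotomy finite/infinite is a property of the set alone and is independent of the starting point.

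Setting
\[
 c^{*}=\sup\{c\in\Q : L_c \text{ is finite}\},\qquad
 c^{**}=\inf\{c\in\Q : \{h\geq c\} \text{ is finite}\},
\]
the previous step gives $\liminf_n h(\s_n)=c^{*}$ and $\limsup_n h(\s_n)=c^{**}$, $\IP_x$-a.s.\ for every starting point $x$. Convergence of $M_n$ then yields $c^{*}=c^{**}=:v$, and for every $\eps>0$ the set $\{y : |h(y)-v|>\eps\}$ is finite. Hence $h(\s_n)\to v$ a.s.\ from every starting point. Harmonicity gives $h(x)=\IE_x[h(\s_n)]$ for all $n$, so bounded convergence yields $h(x)=v$ for every $x\neq 0$. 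This is exactly the Liouville property.

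I do not foresee a serious obstacle; the only point that requires mild care is the assertion that the a.s.\ limit $v$ of $h(\s_n)$ does not depend on the starting position. This is handled precisely by the fact that the finite-vs-infinite dichotomy for the level sets is intrinsic and independent of where $\s$ starts, so the constants $c^{*},c^{**}$ are determined by~$h$ alone. Everything else is a routine application of the optional stopping / bounded martingale convergence theorem, granted the previously proved Theorem~\ref{t_rec_trans} and the transience statement in Proposition~\ref{p_basic_prop}(iv).
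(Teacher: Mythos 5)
Your proposal is correct and takes essentially the same route as the paper: both arguments combine Theorem~\ref{t_rec_trans} with the a.s.\ convergence of the bounded martingale $h(\s_n)$ to show that $h$ has a limit at infinity (the paper does this by contradiction with two infinite level sets $B_1,B_2$, you do it constructively via the level sets $L_c$ and the constants $c^*,c^{**}$). The only cosmetic difference is the last step, where the paper invokes the Maximum Principle while you use $h(x)=\IE_x[h(\s_n)]$ and bounded convergence; both are immediate.
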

These two results, besides being of interest on their own,
will also be operational in the proof of Theorem~\ref{t_bigholes}.

\section{Some auxiliary facts 
and proof of Proposition~\ref{p_basic_prop}}
\label{s_aux}

For $A\subset \Z^d$, recall that $\partial A$ denotes its internal boundary.
We abbreviate $\tau_1(R) = \tau_1(\partial \B(R)).$
We will consider, with a slight abuse of notation,
the function 
\[
a(r)=\frac{2}{\pi}\ln r + \frac{2\gamma+ 3\ln 2}{\pi} 
\]
of a \emph{real} argument~$r\geq 1$. 
To explain why this notation is convenient,
observe that, 
due to~\eqref{formula_for_a}, we may write,
for the case when (say) $2\|x\|\leq r$ and
 as $r\to\infty$,
\begin{equation}
\label{real_a}
 \sum_{y\in\partial \B(x,r)} \nu(y)a(y) = a(r) 
+ O\Big(\frac{\|x\|\vee 1}{r}\Big)
\end{equation}
for \emph{any} probability measure~$\nu$ on $\partial \B(x,r)$.

For all $x \in \Z^2$ and $R\geq 1$ 
such that
$x,y \in  \B(R/2)$ and $x\neq y$,  we have
\begin{equation}
 \label{nothit_r_dim2}
\IP_x[\tau_1(R)< \tau_1(y)] = 
\frac{a(x-y)}{a(R)+O\big(R^{-1}(\|y\|\vee 1)\big)},
\end{equation}
as $R\to \infty$. This is an easy consequence of the 
optional stopping theorem applied to 
the martingale~$a(S_{n\wedge\tau_0(y)}-y)$, together
 with~\eqref{real_a}.
Also, an application of
the optional stopping theorem
to the martingale $1/a(\s_{n\wedge \htau_0(\mathcal{N})})$
yields
\begin{equation}
\label{hitting_condS}
 \IP_x[\htau_1(R)<\htau_1(r)] 
 = \frac{(a(r))^{-1}-(a(x))^{-1}+O(R^{-1})}
 {(a(r))^{-1}-(a(R))^{-1}+O(r^{-1})},
\end{equation}
for $1<r<\|x\|<R<\infty$.
Sending $R$ to infinity in~\eqref{hitting_condS}
we see that for $1\leq r\leq \|x\|$
\begin{equation}
\label{escape_condS}
 \IP_x[\htau_1(r)=\infty] = 1-\frac{a(r)+O(r^{-1})}{a(x)}.
\end{equation}

We need the fact 
that~$S$ conditioned on hitting $\partial\B(R)$ before~$0$
is almost indistinguishable from~$\s$ .
For $A \subset \Z^2$, let~$\Gamma^{(x)}_A$ denote the set of all 
 finite nearest-neighbour
trajectories that start at~$x\in A\setminus\{0\}$ 
and end when entering~$\partial A$ for the first time.
 For~$V\subset \Gamma^{(x)}_A$ write
 $S\in V$ if there exists~$k$ such that 
$(S_0,\ldots,S_k)\in V$ (and the same for
the conditioned walk~$\s$). We write $\Gamma^{(x)}_{0,R}$ for $\Gamma^{(x)}_{B(R)}$.
\begin{lem}
\label{l_relation_S_hatS}
Assume that $V\subset \Gamma^{(x)}_{0,R}$; then we have 
\begin{equation}
\label{eq_relation_S_hatS}
\IP_x[S\in V\mid \tau_1(R)< \tau_1(0)]
 =\IP_x[\s \in V] \big(1+O((R \ln R)^{-1})\big).  
\end{equation}
\end{lem}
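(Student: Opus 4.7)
The plan is to compare the two measures trajectory-by-trajectory, exploiting the fact that $\s$ is a Doob $h$-transform of $S$. Fix any nearest-neighbour trajectory $\pi=(x_0,x_1,\dots,x_k)$ with $x_0=x$, $x_k\in\partial\B(R)$, and $x_j\neq 0$ for all $j<k$ (trajectories visiting $0$ have zero mass under both measures on either side of \eqref{eq_relation_S_hatS}, so may be ignored). Under the simple walk the probability of traversing $\pi$ is $4^{-k}$, whereas \eqref{def_hatS} telescopes to give
\[
 \IP_x[\s\text{ traverses }\pi] \;=\; \prod_{j=0}^{k-1}\frac{a(x_{j+1})}{4a(x_j)} \;=\; \frac{a(x_k)}{a(x)}\cdot 4^{-k}.
\]

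Next I would evaluate the normalizing probability $\IP_x[\tau_1(R)<\tau_1(0)]$ via optional stopping of the martingale $a(S_{n\wedge\tau_1(0)})$ at time $\tau_1(R)\wedge\tau_1(0)$. Since $a(0)=0$ and, by \eqref{formula_for_a}, $a(S_{\tau_1(R)})=a(R)+O(R^{-1})$ on $\{\tau_1(R)<\tau_1(0)\}$, this yields
\[
 \IP_x[\tau_1(R)<\tau_1(0)] \;=\; \frac{a(x)}{a(R)+O(R^{-1})}.
\]
Consequently, for the single trajectory $\pi$,
\[
 \IP_x[S\text{ traverses }\pi\mid\tau_1(R)<\tau_1(0)] \;=\; \frac{4^{-k}\bigl(a(R)+O(R^{-1})\bigr)}{a(x)},
\]
and taking the ratio with the expression above gives
\[
 \frac{\IP_x[S\text{ traverses }\pi\mid\tau_1(R)<\tau_1(0)]}{\IP_x[\s\text{ traverses }\pi]}
 \;=\; \frac{a(R)+O(R^{-1})}{a(x_k)}.
\]
Since $x_k\in\partial\B(R)$, applying \eqref{formula_for_a} once more gives $a(x_k)=a(R)+O(R^{-1})$, so this ratio equals $1+O\bigl(R^{-1}/a(R)\bigr)=1+O((R\ln R)^{-1})$.

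The crucial observation is that this multiplicative correction depends only on the endpoint being in $\partial\B(R)$, not on the particular trajectory $\pi$, nor on its length $k$. Summing over all trajectories in $V$ then yields \eqref{eq_relation_S_hatS} at once. The only genuinely non-routine step is controlling the $O(R^{-1})$ error: one must verify that the asymptotic \eqref{formula_for_a} applies uniformly on $\partial\B(R)$ (so that the error is indeed $O(R^{-1})$ rather than depending on the exit point), and then note that dividing by the leading term $a(R)\asymp\ln R$ converts this into the claimed multiplicative $O((R\ln R)^{-1})$ factor. Everything else is an algebraic consequence of the $h$-transform relation and optional stopping.
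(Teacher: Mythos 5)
Your argument is correct: the trajectory-by-trajectory comparison via the telescoping $h$-transform weight $\frac{a(x_k)}{a(x)}4^{-k}$, combined with optional stopping to evaluate $\IP_x[\tau_1(R)<\tau_1(0)]$ and the uniform estimate $a(x_k)=a(R)+O(R^{-1})$ on $\partial\B(R)$, is precisely the standard proof. The paper itself simply cites Lemma~3.3~(i) of~\cite{CPV16}, whose proof proceeds in essentially the same way, so you have supplied the argument the paper outsources.
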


\begin{proof}
 This is Lemma~3.3 (i) of~\cite{CPV16}.
\end{proof}

If $A\subset A'$ are (finite) subsets
of~$\Z^2$, then the \emph{excursions}
between~$\partial A$ and~$\partial A'$ are
pieces of nearest-neighbour trajectories
that begin on~$\partial A$ and end on~$\partial A'$,
see Figure~\ref{f_excursions}, 
which is, hopefully, self-explanatory.  We refer  
to Section~3.4 of~\cite{CPV16} for formal definitions.

\begin{figure}
\begin{center}
\includegraphics{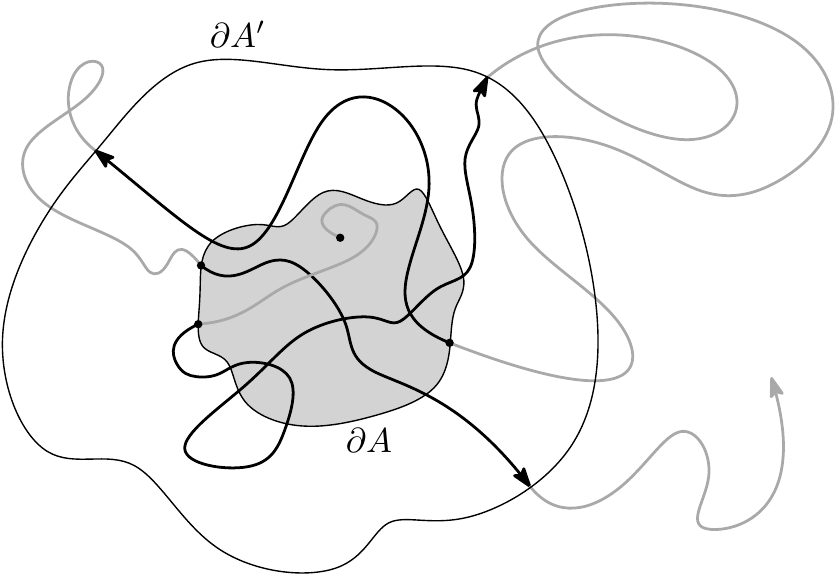}
\caption{Excursions (pictured as bold pieces of trajectories)
of random walks between~$\partial A$ and $\partial A'$.}
\label{f_excursions}
\end{center}
\end{figure}

\begin{proof}[Proof of Proposition~\ref{p_basic_prop}.]
It is straightforward to check (i)--(iii) directly,
we leave this task for the reader. 
Item~(iv) (the transience) 
follows from~(iii) and Theorem~2.5.8 of~\cite{MPW17}.

As for~(v), we first observe that~\eqref{escape_from_site}
is a consequence of~\eqref{not_hit_site}, although it is of course
also possible to prove it directly, see Proposition~2.2
of~\cite{CPV16}.
Indeed, using~\eqref{def_hatS} and then \eqref{not_hit_site}, \eqref{a_harm} and \eqref{a(x)=1}, one can write 
\begin{align*}
 \IP_x\big[\htau_1(x)<\infty\big] 
&= \frac{1}{4a(x)}\sum_{y\sim x} a(y)
  \IP_y\big[\htau_1(x)<\infty\big]\\
&= \frac{1}{4a(x)}\sum_{y\sim x} \frac{1}{2}\left(a(y) + a(x) -a(y-x)\right)   \\
&= 1 - \frac{1}{2a(x)}.
\end{align*}

Now, to prove~\eqref{not_hit_site}, we essentially
use the approach of Lemma~3.7 of~\cite{CPV16}, although
here the calculations are simpler.
Let us define (note that all the probabilities below
are for the simple random walk~$S$)
\begin{align*}
 h_1&=\IP_x[\tau_1(0) <\tau_1(R)],\\
h_2&= \IP_x[\tau_1(y) <\tau_1(R)],\\
q_{12}&=\IP_0[\tau_1(y) <\tau_1(R)],\\
q_{21}&=\IP_y[\tau_1(0)<\tau_1(R)],\\
p_1&=\IP_x[\tau_1(0) <\tau_1(R) \wedge \tau_1(y)],\\
p_2&=\IP_x[\tau_1(y) <\tau_1(R) \wedge \tau_1(0)],
\end{align*}
see Figure~\ref{f_pqh12}.

\begin{figure}
\begin{center}
\includegraphics{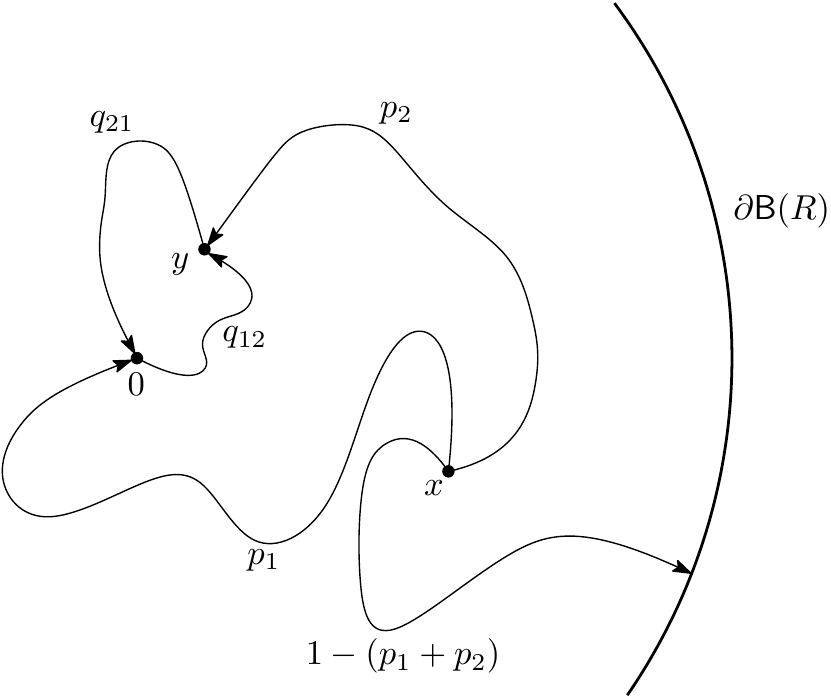}
\caption{Trajectories for the probabilities of interest.}
\label{f_pqh12}
\end{center}
\end{figure}
Using~\eqref{nothit_r_dim2} (and in addition the Markov property and \eqref{a_harm} for \eqref{q12})  we have for $x,y \neq 0$, $x \neq y$
\begin{align}
 h_1&=1-\frac{a(x)}{a(R)+O(R^{-1})},
  \label{h1}\\
 h_2&=1-\frac{a(x-y)}{a(R)+O(R^{-1}\|y\|)},
\label{h2}\\
 q_{12}&=1-\frac{a(y)}{a(R)+O(R^{-1}\|y\|)},
 \label{q12}\\
 q_{21}&=1-\frac{a(y)}{a(R)+O(R^{-1})},
 \label{q21}
\end{align}
which implies that 
\begin{align}
\lim_{R\to\infty} (1-h_1)a(R)&=a(x),
  \label{lim_h1}\\
 \lim_{R\to\infty} (1-h_2)a(R)&=a(x-y),
\label{lim_h2}\\
\lim_{R\to\infty} (1-q_{12})a(R) &= a(y),
 \label{lim_q12}\\
 \lim_{R\to\infty} (1-q_{21})a(R) &=a(y).
 \label{lim_q21}
\end{align}

Observe that, due to the Markov property, it holds that
\begin{align*}
 h_1 &= p_1 + p_2q_{21},\\
 h_2 &= p_2 + p_1q_{12}.
\end{align*}
Solving these equations with respect to $p_1,p_2$, we
obtain
\begin{align}
 p_1 &= \frac{h_1-h_2q_{21}}{1-q_{12}q_{21}},
\label{expr_p1}\\
 p_2 &= \frac{h_2-h_1q_{12}}{1-q_{12}q_{21}}.
\label{expr_p2}
\end{align}
Let us denote 
\begin{equation}
\label{bar_all}
\hh_1=1-h_1, \quad \hh_2=1-h_2,\quad
\hq_{12}=1-q_{12},\quad \hq_{21}=1-q_{21}. 
\end{equation}
Next, using
Lemma~\ref{l_relation_S_hatS}, we have that 
\begin{align}
 \IP_x[\htau_1(y)<\htau_1(R)] &= \IP_x[\tau_1(y)<\tau_1(R)\mid \tau_1(R)<\tau_1(0)]
\big(1+o(R^{-1})\big)\nonumber\\
 &=\frac{\IP_x[\tau_1(y)<\tau_1(R)<\tau_1(0)]}{\IP_x[\tau_1(R)<\tau_1(0)]}
\big(1+o(R^{-1})\big)\nonumber\\
&=\frac{p_2(1-q_{21})}{1-h_1} \big(1+o(R^{-1})\big)\nonumber\\
&= \frac{(h_2-h_1q_{12})(1-q_{21})}{(1-q_{12}q_{21})(1-h_1)}
\big(1+o(R^{-1})\big)\nonumber\\
&= \frac{(\hh_1+\hq_{12}-\hh_2-\hh_1\hq_{12})\hq_{21}}
{(\hq_{12}+\hq_{21}-\hq_{12}\hq_{21})\hh_1} \big(1+o(R^{-1})\big).
\label{hat_hq}
\end{align}
Since $\IP_x[\htau_1(y)<\infty]=\lim_{R\to\infty}
\IP_x[\htau_1(y)<\htau_1(R)]$,
using~\eqref{lim_h1}--\eqref{lim_q21} we obtain~\eqref{not_hit_site}
(observe that the ``product'' terms in~\eqref{hat_hq}
are of smaller order and will disappear in the limit).
\end{proof}

%

We now use the ideas contained in the last proof to obtain
some refined bounds on the hitting probabilities
for excursions of the conditioned walk.

\begin{figure}
\begin{center}
\includegraphics{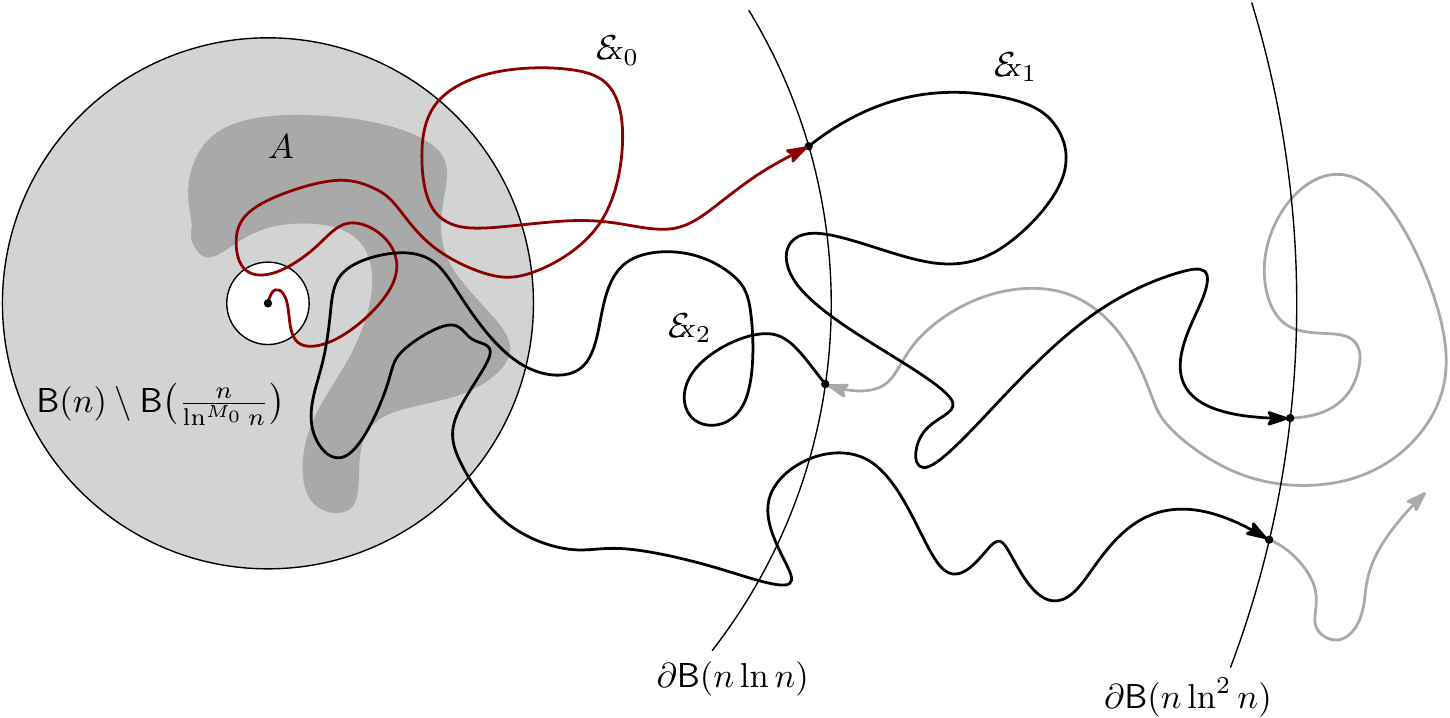}
\caption{Excursions and their visits to $A$}
\label{f_excur_disks}
\end{center}
\end{figure}

Let us assume that $\|x\|\geq n\ln^{-M_0}n$ and $y\in A$,
where the set~$A$ is as in Theorem~\ref{t_main_res}.
Also, abbreviate $R=n\ln^2 n$.
\begin{lem}
\label{l_refined_probs} 
In the above situation, we have
\begin{equation}
\label{gen_exc_hit} 
 \IP_x[\htau_1(y)<\htau_1(R)] = \big(1+O(\ln^{-3} n)\big)
\frac{a(x)a(R)+a(y)a(R)-a(x-y)a(R)-a(x)a(y)}{a(x)(2a(R)-a(y))}.
\end{equation}
\end{lem}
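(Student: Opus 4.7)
The plan is to re-run the computation from the proof of Proposition~\ref{p_basic_prop}(v), but instead of sending $R\to\infty$, to substitute $R=n\ln^2 n$ and track errors quantitatively. The starting point is the exact identity already derived there:
\[
\IP_x[\htau_1(y)<\htau_1(R)] = \frac{(\hh_1+\hq_{12}-\hh_2-\hh_1\hq_{12})\,\hq_{21}}{(\hq_{12}+\hq_{21}-\hq_{12}\hq_{21})\,\hh_1}\bigl(1+O((R\ln R)^{-1})\bigr),
\]
whose outer prefactor (from Lemma~\ref{l_relation_S_hatS}) is $1+O((n\ln^3 n)^{-1})$, much sharper than what we need.

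Next I would substitute the explicit expressions \eqref{h1}--\eqref{q21}. In the current regime the correction $O(R^{-1}\|y\|)$ in the denominators of $\hh_2$ and $\hq_{12}$ satisfies $R^{-1}\|y\|\leq \ln^{-2}n$, so dividing by $a(R)\asymp\ln n$ gives
\[
\hh_2 = \tfrac{a(x-y)}{a(R)}\bigl(1+O(\ln^{-3}n)\bigr),\qquad \hq_{12} = \tfrac{a(y)}{a(R)}\bigl(1+O(\ln^{-3}n)\bigr),
\]
with $\hh_1$ and $\hq_{21}$ satisfying the same approximation, with the even smaller error $O((n\ln^3 n)^{-1})$.

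Then I would multiply numerator and denominator of the big fraction by $a(R)^3$ and expand. The main terms combine to
\[
\text{Num}\approx \frac{a(y)\bigl[a(R)(a(x)+a(y)-a(x-y))-a(x)a(y)\bigr]}{a(R)^3},\qquad \text{Den}\approx \frac{a(x)a(y)(2a(R)-a(y))}{a(R)^3},
\]
and the factor $a(y)$ cancels, producing exactly the expression on the right-hand side of \eqref{gen_exc_hit}. The cross terms coming from the $O(\ln^{-3}n)$ perturbations contribute additive errors of size $O(\ln^{-3}n)$ times the main parts to both numerator and denominator.

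The delicate point, and the main obstacle, is to turn these additive errors into a multiplicative $1+O(\ln^{-3}n)$ in the final ratio. This requires a lower bound on the main numerator $a(R)(a(x)+a(y)-a(x-y))-a(x)a(y)$: using \eqref{formula_for_a} together with $\|x\|,\|y\|\in[n\ln^{-M_0}n,n]$ and $R=n\ln^2 n$, one has $a(R)-a(x), a(R)-a(y)\gtrsim \ln\ln n$, from which a direct algebraic computation (for instance, by writing $a(R)=L+\Delta_R$, $a(x)=L-\Delta_x$, etc., with $L=(2/\pi)\ln n$) shows that this quantity is of order at least $\ln n\cdot\ln\ln n$. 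Combined with the denominator $a(x)(2a(R)-a(y))\asymp \ln^2 n$, this lower bound lets the additive errors translate into the claimed relative error $O(\ln^{-3}n)$. Once this is in place, everything else is bookkeeping.
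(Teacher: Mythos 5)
Your proposal follows the paper's own proof nearly verbatim: you reuse the exact ratio obtained while proving \eqref{not_hit_site}, substitute the quantitative expressions \eqref{h1}--\eqref{q21} with $R=n\ln^2 n$, keep the error terms instead of letting $R\to\infty$, and observe the same cancellation of the common factor $a(y)/a^3(R)$. You also flag, and correctly sketch how to resolve, the one point the paper leaves implicit, namely that the main numerator $a(R)\big(a(x)+a(y)-a(x-y)\big)-a(x)a(y)$ must be bounded below (here it is of order $\ln n\,\ln\ln n$, not $\ln^2 n$) so that the additive error coming from the $O(R^{-1}\|y\|)$ corrections yields only a small multiplicative factor; note, though, that tracking this carefully gives a relative error of order $1/(\ln^2 n\,\ln\ln n)$ rather than the stated $\ln^{-3}n$ --- a discrepancy present also in the paper's computation and entirely harmless, since only the far coarser $O(\ln\ln n/\ln n)$ is used downstream in \eqref{hit_y}--\eqref{hit_z}.
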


\begin{proof}
%
%
This is essentially the same calculation as in the 
proof of~\eqref{not_hit_site}, with the following
difference: after arriving to the expression~\eqref{hat_hq},
instead of sending~$R$ to infinity
(which conveniently ``kills'' many terms there),
we need to carefully deal with all the $O$'s.
Specifically, we reuse notations~\eqref{h1}--\eqref{q21}
and~\eqref{bar_all}, then write
\begin{align}
 \IP_x[\htau_1(y)<\htau_1(R)] &= \IP_x[\tau_1(y)<\tau_1(R)\mid \tau_1(R)<\tau_1(0)]
\big(1+O(n^{-1})\big)\nonumber\\
&= \frac{B_1}{B_2}\big(1+o(n^{-1})\big),
\label{B1/B2}
\end{align}
where (observe that, since $\|y\|\leq n$ and $R=n\ln^2 n$,
we have $a(R)+O(R^{-1}\|y\|)
=a(R)+O(\ln^{-2} n)=a(R)(1+O(\ln^{-3} n)$)
\begin{align*}
 B_1 &= (\hh_1+\hq_{12}-\hh_2-\hh_1\hq_{12})\hq_{21}\\
&= \frac{a(y)}{a(R)+O(R^{-1})}
\Big(\frac{a(x)}{a(R)+O(R^{-1})}+\frac{a(y)}{a(R)+O(R^{-1}\|y\|)}\\
& \qquad \qquad \qquad
-\frac{a(x-y)}{a(R)+O(R^{-1}\|y\|)}
-\frac{a(x)a(y)}{(a(R)+O(R^{-1}))(a(R)+O(R^{-1}\|y\|))}\Big)\\
&= \big(1+O((R\ln R)^{-1})\big)
\frac{a(y)}{a(R)} \cdot
\frac{a(x)a(R)+a(y)a(R)-a(x-y)a(R)-a(x)a(y)}{(1+O(\ln^{-3} n))a^2(R)}\\
&= \big(1+O(\ln^{-3} n)\big)
\frac{a(y)}{a(R)} \cdot
\frac{a(x)a(R)+a(y)a(R)-a(x-y)a(R)-a(x)a(y)}{a^2(R)},
\end{align*}
and
\begin{align*}
 B_2 &= (\hq_{12}+\hq_{21}-\hq_{12}\hq_{21})\hh_1\\
&= \frac{a(x)}{a(R)+O(R^{-1})}
\Big(\frac{a(y)}{a(R)+O(R^{-1}\|y\|)}+\frac{a(y)}{a(R)+O(R^{-1})}\\
& \qquad \qquad \qquad \qquad \qquad
-\frac{a^2(y)}{(a(R)+O(R^{-1}))(a(R)+O(R^{-1}\|y\|))}\Big)\\
&=\big(1+O((R\ln R)^{-1})\big)
\frac{a(x)}{a(R)} \cdot 
\frac{2a(y)a(R)-a^2(y)+O(\ln^{-1}n)}{(1+O(\ln^{-3} n))a^2(R)}\\
&= \big(1+O(\ln^{-3} n)\big)
\frac{a(x)}{a(R)} \cdot \frac{2a(y)a(R)-a^2(y)}{a^2(R)}.
\end{align*}
We insert the above back to~\eqref{B1/B2}
and note that the factor $\frac{a(y)}{a^3(R)}$ cancels
to obtain~\eqref{gen_exc_hit}.
\end{proof}

\section{Proofs of the main results}
\label{s_proofs}
We start with
\begin{proof}[Proof of Theorem~\ref{t_main_res}]
First, we describe informally the idea of the proof.
We consider the visits to the set $A$ during excursions 
of the walk from $\partial\B(n\ln n)$ to
$\partial\B(n\ln^2 n)$, see Figure~\ref{f_excur_disks}.
The crucial argument is the following: the randomness of~$\V(A)$
comes from the \emph{number} of excursions 
  and not from the excursions themselves.
If the number of excursions is around $c\times\frac{\ln n}{\ln\ln n}$,
then it is possible to show (using a standard
weak-LLN argument) that the proportion of uncovered sites
in~$A$ is \emph{concentrated} around~$e^{-c}$.
On the other hand, that number of excursions can be modeled
roughly as $Y \times\frac{\ln n}{\ln\ln n}$, where~$Y$ is an
Exponential($1$) random variable. Then,
$\IP[\V(A)\leq s]\approx \IP[Y\geq \ln s^{-1}]=s$,
as required.

We now give a rigorous argument.
Let $\hH$ be the conditional entrance measure
for the (conditioned) walk~$\s$, i.e.,
\begin{equation}
\label{df_hatH}
 \hH_A(x,y) = \IP_x\big[\s_{\htau_1(A)}=y\mid \htau_1(A)<\infty\big].
\end{equation}
Let us first denote the initial piece of the trajectory
by $\Ex_0=\s_{[0,\htau(n\ln n)]}$.
Then, we consider a \emph{Markov chain} $(\Ex_k,k\geq 1)$
of excursions between $\partial\B(n\ln n)$
and $\partial\B(n\ln^2 n)$, defined
 in the following
way: for $k\geq 2$ the initial site of~$\Ex_k$ is chosen according
to the measure~$\hH_{\B(n\ln n)}(z_{k-1},\cdot)$,
where $z_{k-1}\in\partial\B(n\ln^2 n)$ is the last
site of the excursion~$\Ex_{k-1}$;
also, the initial site of~$\Ex_1$ is 
the last site of $\Ex_0$; the weights of trajectories 
are chosen according to~\eqref{def_hatS}
(i.e., each excursion is an $\s$-walk trajectory).
It is important to observe that one may couple
 $(\Ex_k,k\geq 1)$ with the ``true'' excursions of the walk~$\s$
in an obvious way: one just picks the excursions subsequently,
each time tossing a coin to decide if the walk returns
to $\B(n\ln n)$.

Let 
\[
  \psi_n = \min_{x\in\partial\B(n\ln^2 n)}\IP_x[\htau(n\ln n)=\infty]
\]
be the \emph{minimal}  probability to avoid $\B(n\ln n)$,
starting at sites of $\partial\B(n\ln^2 n)$.
Using~\eqref{escape_condS} it is straightforward to obtain that
\[
 \IP_x[\htau_1(n\ln n)=\infty] = \frac{\ln\ln n}{\ln n + 2\ln\ln n}\big(1+O(n^{-1})\big)
\]
for any $x\in \partial\B(n\ln^2 n)$, and so it also holds that
\begin{equation}
\label{oc_psi_n}
 \psi_n = \frac{\ln\ln n}{\ln n + 2\ln\ln n}\big(1+O(n^{-1})\big). 
\end{equation}
Let us consider a sequence of i.i.d.\ random 
variables $(\eta_k,k\geq 0)$ such that
$\IP[\eta_k=1]=1-\IP[\eta_k=0]=\psi_n$.
Let $\hN  = \min\{k:\eta_k = 1\}$, so that~$\hN$ is a Geometric
random variable with mean~$\psi_n^{-1}$.
Now, \eqref{oc_psi_n} implies that 
$\IP_x[\htau(n\ln n)=\infty]-\psi_n\leq O\big(\frac{\ln\ln n}{n\ln n}\big)$
for any $x\in \partial\B(n\ln^2 n)$,
so it is clear\footnote{Let $(Z_n,n\geq 1)$ 
be a sequence of $\{0,1\}$-valued
 random variables
adapted to a filtration $(\FFF_n,n\geq 1)$ and such that
$\IP[Z_{n+1}=1\mid \FFF_n]\in [p,p+\eps]$ a.s..
Then it is elementary to obtain that
the total variation distance between the random 
variable $\min\{k: Z_k=1\}$ and the Geometric random variable 
with mean~$p^{-1}$ is bounded above by $O(\eps/ p)$.}
 that~$\hN $ can be coupled with the 
actual number of excursions~$N$ in such a way that 
$N\leq \hN $ a.s.\ and 
\begin{equation}
\label{N_neq_hatN}
 \IP[N \neq \hN ] \leq 
 O(n^{-1}).
\end{equation}
Note that this construction preserves the independence
of~$\hN$ from the excursion sequence $(\Ex_k,k\geq 1)$
itself.

Define 
\begin{align*}
 \RR^{(k)} &= \frac{\big|A\cap 
(\Ex_0\cup \Ex_1\cup \ldots \cup \Ex_k)\big|}{|A|},
\intertext{and}
 \V^{(k)} &= \frac{\big|A\setminus 
(\Ex_0\cup \Ex_1\cup \ldots \cup \Ex_k)\big|}{|A|}=1-\RR^{(k)}
\end{align*}
to be the proportions of visited and unvisited sites in~$A$
with respect to the first~$k$ excursions
together with the initial piece~$\Ex_0$.

Now, it is straightforward to check
that~\eqref{gen_exc_hit} implies that, 
for any $x\in \partial\B(n\ln n)$ and $y \in A$
\begin{equation}
\label{hit_y}
 \IP_x\big[\htau_1(y)<\htau_1(n\ln^2 n)\big] = \frac{\ln\ln n}{\ln n}
   \Big(1+O\Big(\frac{\ln\ln n}{\ln n}\Big)\Big),
\end{equation}
and, for $y,z\in \B(n)\setminus\B\big(\frac{n}{2\ln^{M_0}n}\big)$ 
such that $\|y-z\| = n/b$ with 
$b\leq 2\ln^{M_0}n$
\begin{equation}
\label{hit_z}
  \IP_z\big[\htau_1(y)<\htau_1(n\ln^2 n)\big] = 
\frac{2\ln\ln n + \ln b}{\ln n}
   \Big(1+O\Big(\frac{\ln\ln n}{\ln n}\Big)\Big).
\end{equation}
Indeed, first, observe that the factor~$B_2$
in~\eqref{gen_exc_hit}
is, in both cases,
\begin{equation}
\label{ocenka_B2}
 a(x)(2a(R)-a(y)) = \Big(\frac{2}{\pi}\Big)^2
  \ln^2 n + O\big(\ln n\ln\ln n\big).
\end{equation}
As for the factor~$B_1$, we have
\begin{align*}
B_1 &=  a(x)a(R)+a(y)a(R)-a(x-y)a(R)-a(x)a(y)\\
 &= (a(x)-a(x-y))a(R) - (a(R)-a(x))a(y)\\
 &= O\big((\ln n)^{-1}\big)\times O(\ln n)
 + \Big(\frac{2}{\pi}\ln\ln n + o(n^{-2})\Big)
 \times \Big(\frac{2}{\pi}\ln n + O(\ln\ln n)\Big)\\
&= \Big(\frac{2}{\pi}\Big)^2
  \ln n\ln\ln n + O\big((\ln\ln n)^2\big)
\end{align*}
in the case of~\eqref{hit_y}, and
(writing also $\|z\|=n/c$ with 
$(2\ln^{M_0} n)^{-1}\leq c\leq 1$)
\begin{align*}
B_1 &=  a(z)a(R)+a(y)a(R)-a(z-y)a(R)-a(z)a(y)\\
 &= (a(z)-a(z-y))a(R) - (a(R)-a(z))a(y)\\
 &= \frac{2}{\pi}\Big(-\ln c +\ln b + o(n^{-1})\Big)
 \times \frac{2}{\pi}\Big(\ln n + O(\ln\ln n)\Big)\\
& \qquad + \frac{2}{\pi}\Big(2\ln\ln n +\ln c + o(n^{-1})\Big)
 \times \Big(\frac{2}{\pi}\ln n + O(\ln\ln n)\Big)\\
&= \Big(\frac{2}{\pi}\Big)^2
  \ln n\times(2\ln\ln n + \ln b) + O\big((\ln\ln n)^2\big)
\end{align*}
in the case of~\eqref{hit_z};
with~\eqref{ocenka_B2} we then 
 obtain~\eqref{hit_y}--\eqref{hit_z}.

For $y\in A$ and a fixed~$k\geq 1$ consider the random variable
\[
 \xi_y^{(k)} = \1{y\notin\Ex_0\cup \Ex_1\cup \ldots \cup \Ex_k},
\]
so that $\V^{(k)} = |A|^{-1}\sum_{y\in A}\xi_y^{(k)}$.
Now \eqref{hit_y}
implies that, for all $j\geq 1$,
\[
\IP[y\notin \Ex_j] = 1-\frac{\ln\ln n}{\ln n}
   \Big(1+O\Big(\frac{\ln\ln n}{\ln n}\Big)\Big),
\]
and~\eqref{hit_z} implies that 
\[
 \IP[y\notin \Ex_0\cup\Ex_1] = 1 - O\Big(\frac{\ln\ln n}{\ln n}\Big)
\]
for any~$y\in A$.
Let $\mu_y^{(k)}=\IE \xi_y^{(k)}$. Then we have
\begin{align}
\mu_y^{(k)} &= \IP[y\notin\Ex_0\cup \Ex_1\cup \ldots \cup \Ex_k]
\nonumber\\
  &= \Big(1-O\Big(\frac{\ln\ln n}{\ln n}\Big)\Big)
\times 
\Bigg(\Big(1-\frac{\ln\ln n}{\ln n}
   \Big(1+O\Big(\frac{\ln\ln n}{\ln n}\Big)\Big)\Big)
\Bigg)^{k-1}\nonumber\\
 &= \exp\Big(
  -k\frac{\ln\ln n}{\ln n}
\Big(1+O\Big(k^{-1}+\frac{\ln\ln n}{\ln n}\Big)\Big)\Big).
\label{est_mu_k}
\end{align}
Next, we need to estimate the covariance of~$\xi_y^{(k)}$
and~$\xi_z^{(k)}$ in case $\|y-z\|\geq n\ln^{-M_0}n$.
First note that, for any $x\in \partial\B(n\ln n)$
\begin{align*}
 \IP_x\big[\{y,z\}\cap \Ex_1 = \emptyset \big] &=
 1-\IP_x[y\in\Ex_1] - \IP_x[z\in\Ex_1]
+ \IP_x\big[\{y,z\}\subset \Ex_1 \big]\\
&= 1- 2 \frac{\ln\ln n}{\ln n}
\Big(1+O\Big(\frac{\ln\ln n}{\ln n}\Big)\Big)
+ \IP_x\big[\{y,z\}\subset \Ex_1 \big]
\end{align*}
by~\eqref{hit_y};
also, since
\begin{align*}
 \big\{\htau_1(y)<\htau_1(z)<\htau_1(n\ln^2 n)\big\}
&\subset
  \big\{\htau_1(y)<\htau_1(n\ln^2 n),\\
 &\qquad\qquad\s_k=z \text{ for some } 
\htau_1(y)<k<\htau_1(n\ln^2 n)\big\}
\end{align*}
 from~\eqref{hit_y}--\eqref{hit_z} we obtain
\begin{align*}
\IP_x\big[\{y,z\}\subset \Ex_1 \big] &=
 \IP_x\big[\max\{\htau_1(y),\htau_1(z)\}<\htau_1(n\ln^2 n)\big]
\\
&= \IP_x\big[\htau_1(y)<\htau_1(z)<\htau_1(n\ln^2 n)\big]
+  \IP_x\big[\htau_1(z)<\htau_1(y)<\htau_1(n\ln^2 n)\big]\\
&\leq \IP_x\big[\htau_1(y)<\htau_1(n\ln^2 n)\big]
  \IP_y\big[\htau_1(z)<\htau_1(n\ln^2 n)\big]\\
& \qquad + \IP_x\big[\htau_1(z)<\htau_1(n\ln^2 n)\big]
  \IP_z\big[\htau_1(y)<\htau_1(n\ln^2 n)\big]\\
&\leq 2\frac{\ln\ln n}{\ln n} \times \frac{(2+M_0)\ln\ln n}{\ln n}
\Big(1+O\Big(\frac{\ln\ln n}{\ln n}\Big)\Big)\\
 &= O\Big(\Big(\frac{\ln\ln n}{\ln n}\Big)^2\Big).
\end{align*}
Therefore, similarly to~\eqref{est_mu_k} we obtain  
 \begin{align*}
  \IE(\xi_y^{(k)}\xi_z^{(k)})
&= \exp\Big(-2k\frac{\ln\ln n}{\ln n}
\Big(1+O\Big(k^{-1}+\frac{\ln\ln n}{\ln n}\Big)\Big)\Big),
 \end{align*}
which, together with~\eqref{est_mu_k},
  implies after some elementary calculations
that, for all~$y,z\in A$ such that $\|y-z\|\geq n\ln^{-M_0}n$
\begin{equation}
\label{est_cov}
 \cov (\xi_y^{(k)},\xi_z^{(k)}) 
= O\Big(\frac{\ln\ln n}{\ln n}\Big)
\end{equation}
uniformly in~$k$, 
since
\[
\Bigg(\frac{\ln\ln n}{\ln n} + k\Big(\frac{\ln\ln n}{\ln n}\Big)^2
\Bigg) \exp\Big(-2k\frac{\ln\ln n}{\ln n}\Big)
= O\Big(\frac{\ln\ln n}{\ln n}\Big)
\]
uniformly in~$k$.
Recall the notation~$\ell^{(n)}_A$ from~\eqref{df_ell_A}.
Now, using Chebyshev's inequality, we write
\begin{align}
 \lefteqn{
\IP\Big[\Big||A|^{-1}\sum_{y\in A}(\xi_y^{(k)}
    -\mu_y^{(k)})\Big|>\eps\Big]
}\nonumber\\
&\leq (\eps|A|)^{-2} \Var \Big(\sum_{y\in A}\xi_y^{(k)}\Big)
\nonumber\\
&= (\eps|A|)^{-2} \sum_{y,z\in A}\cov (\xi_y^{(k)},\xi_z^{(k)})
\nonumber\\
 &= (\eps|A|)^{-2} \Bigg(\sum_{\substack{y,z\in A,\\ 
 \|y-z\|< \frac{n}{\ln^{M_0}n}}} \cov (\xi_y^{(k)},\xi_z^{(k)})
+ \sum_{\substack{y,z\in A,\\ 
 \|y-z\|\geq  \frac{n}{\ln^{M_0}n}}}
\cov (\xi_y^{(k)},\xi_z^{(k)})  \Bigg)
\nonumber\\
&\leq (\eps|A|)^{-2}
\Big(\sum_{y\in A} \big|A\cap 
\B(y,{\textstyle\frac{n}{\ln^{M_0}n}})\big| 
+ |A|^2 
 O\Big(\frac{\ln\ln n}{\ln n}\Big)
 \Big)\nonumber\\
&\leq \eps^{-2} 
\ell^{(n)}_A
+ \eps^{-2} 
 O\Big(\frac{\ln\ln n}{\ln n} \Big).
\label{bigcalc_Cheb}
\end{align}
 
Let 
\[
 \Phi^{(s)} = \min\big\{k: \V^{(k)}\leq s\big\}
\]
be the number of excursions necessary to make the 
unvisited proportion of~$A$ at most~$s$.
We have
\begin{align*}
 \IP[\V(A)\leq s] & = \IP[\Phi^{(s)}\leq N]\\
 &= \IP[\Phi^{(s)}\leq N,N=\hN ] + 
 \IP[\Phi^{(s)}\leq N,N\neq \hN ]\\
&= \IP[\Phi^{(s)}\leq \hN ]
+ \IP[\Phi^{(s)}\leq N,N\neq \hN ]
- \IP[\Phi^{(s)}\leq \hN ,N\neq \hN ],
\end{align*}
so, recalling~\eqref{N_neq_hatN},
\begin{equation}
\label{V_Phi_hatN}
 \big|\IP[\V(A)\leq s] - \IP[\Phi^{(s)}\leq \hN ]\big|
\leq \IP[N\neq \hN ] \leq O(n^{-1}).
\end{equation}

Next, we write
\begin{align}
\IP[\Phi^{(s)}\leq \hN ]
 &= \IE \big(\IP[\hN \geq \Phi^{(s)}\mid\Phi^{(s)}]\big)\nonumber \\
&=\IE (1-\psi_n)^{\Phi^{(s)}},
\label{genfunc}
\end{align}
(here we used the independence property stated 
below~\eqref{N_neq_hatN})
and concentrate on obtaining lower and upper bounds 
on the expectation in the right-hand side of~\eqref{genfunc}.
For this, assume that $s\in(0,1)$ is fixed and abbreviate 
\begin{align*}
 \delta_n &= \Big(\frac{\ln\ln n}{\ln n}\Big)^{1/3}\\
 k_n^{-} &= \Big\lfloor(1-\delta_n)\ln s^{-1} \frac{\ln n}{\ln\ln n}
   \Big\rfloor,\\
   k_n^{+} &= \Big\lceil(1+\delta_n)\ln s^{-1} \frac{\ln n}{\ln\ln n}
   \Big\rceil;
\end{align*}
we also assume that~$n$ is sufficiently large so that 
$\delta_n \in(0,\frac{1}{2})$ and $1<k_n^{-}<k_n^{+}$.
Now, according to~\eqref{est_mu_k},
\begin{align*}
 \mu_y^{(k_n^{\pm})} &= \exp\Big(
  -(1\pm \delta_n)\ln s^{-1}
\Big(1+O\Big((k_n^{\pm})^{-1}+\frac{\ln\ln n}{\ln n}\Big)\Big)\Big)\\
 & = s \exp\Big(
  -\ln s^{-1}
\Big(\pm \delta_n+O\Big((k_n^{\pm})^{-1}+\frac{\ln\ln n}{\ln n}\Big)\Big)\Big)\\
& = s \Big(1+O\Big(\delta_n\ln s^{-1}
+ \frac{\ln\ln n}{\ln n}(1+\ln s^{-1})\Big)\Big),
\end{align*}
so in both cases it holds that
(observe that $s\ln s^{-1}\leq 1/e$ for all $s\in[0,1]$)
\begin{equation}
\label{muk_ourcase} 
  \mu_y^{(k_n^{\pm})} 
= s + O\Big(\delta_n +\frac{\ln\ln n}{\ln n} \Big)
= s + O(\delta_n). 
\end{equation}
With a similar calculation, one can also observe that
\begin{equation}
\label{1-psi_ourcase} 
  (1-\psi_n)^{(k_n^{\pm})} 
= s + O(\delta_n). 
\end{equation}

We then write, using~\eqref{muk_ourcase} 
\begin{align}
 \IP[\Phi^{(s)} > k_n^{+}] &= \IP[\V^{(k_n^{+})}>s]\nonumber\\
&=  \IP\Big[|A|^{-1}\sum_{y\in A}\xi_y^{(k_n^{+})}> s\Big]\nonumber\\
 &= \IP\Big[|A|^{-1}\sum_{y\in A}(\xi_y^{(k_n^{+})}
   -\mu_y^{(k_n^{+})})> s 
   - |A|^{-1}\sum_{y\in A}\mu_y^{(k_n^{+})}\Big]\nonumber\\
 &= \IP\Big[|A|^{-1}\sum_{y\in A}(\xi_y^{(k_n^{+})}
   -\mu_y^{(k_n^{+})})>
O(\delta_n)
\Big].
\label{Cheb+}
\end{align}
Then, \eqref{bigcalc_Cheb} implies that 
\begin{equation}
\label{est_k+}
   \IP[\Phi^{(s)} > k_n^{+}] 
\leq O\Big(\ell^{(n)}_A\Big(\frac{\ln\ln n}{\ln n}\Big)^{-2/3}
 + \Big(\frac{\ln\ln n}{\ln n}\Big)^{1/3} \Big).
\end{equation}
Quite analogously, one can also obtain that
\begin{equation}
\label{est_k-}
   \IP[\Phi^{(s)} < k_n^{-}] 
\leq O\Big(\ell^{(n)}_A\Big(\frac{\ln\ln n}{\ln n}\Big)^{-2/3}
 + \Big(\frac{\ln\ln n}{\ln n}\Big)^{1/3} \Big).
\end{equation}
Using~\eqref{1-psi_ourcase} and~\eqref{est_k+}, we then write
\begin{align}
 \IE (1-\psi_n)^{\Phi^{(s)}} &\geq
   \IE \big((1-\psi_n)^{\Phi^{(s)}}\1{\Phi^{(s)} \leq k_n^{+}}\big)
\nonumber\\
 &\geq (1-\psi_n)^{k_n^{+}}
   \IP[\Phi^{(s)} \leq k_n^{+}]\nonumber\\
 &\geq \Big(s-O\Big(\Big(\frac{\ln\ln n}{\ln n}\Big)^{1/3}\Big)\Big)
\Big(1-O\Big(\ell^{(n)}_A\Big(\frac{\ln\ln n}{\ln n}\Big)^{-2/3}
 + \Big(\frac{\ln\ln n}{\ln n}\Big)^{1/3} \Big)\Big),
\label{bigest>}
\end{align}
and, using~\eqref{1-psi_ourcase} and~\eqref{est_k-},
\begin{align}
 \IE (1-\psi_n)^{\Phi^{(s)}} &=
   \IE \big((1-\psi_n)^{\Phi^{(s)}}\1{\Phi^{(s)} \geq k_n^{-}}\big)
 + \IE \big((1-\psi_n)^{\Phi^{(s)}}\1{\Phi^{(s)} < k_n^{-}}\big)
\nonumber\\
 &\leq (1-\psi_n)^{k_n^{-}}
   + \IP[\Phi^{(s)} < k_n^{-}]
\nonumber\\
 &\leq \Big(s+O\Big(\Big(\frac{\ln\ln n}{\ln n}\Big)^{1/3}\Big)\Big)
\nonumber\\
& \qquad
+
\Big(1-O\Big(\ell^{(n)}_A\Big(\frac{\ln\ln n}{\ln n}\Big)^{-2/3}
 + \Big(\frac{\ln\ln n}{\ln n}\Big)^{1/3} \Big)\Big).
\label{bigest<}
\end{align}
%
%
%
%
Therefore, using also~\eqref{V_Phi_hatN}--\eqref{genfunc},
we obtain~\eqref{main_res}, thus concluding the 
proof of Theorem~\ref{t_main_res}.
\end{proof}

Next, we will prove Theorems~\ref{t_rec_trans}
and~\ref{t_Liouville}, since the latter will be needed
in the course of the proof of Theorem~\ref{t_bigholes}. 

\begin{proof}[Proof of Theorem~\ref{t_rec_trans}]
Clearly, we only need to prove that every infinite subset
of~$\Z^d$ is recurrent for~$\s$.  
Basically, this is a consequence of the fact that,
due to~\eqref{not_hit_site},
\begin{equation}
\label{lim1/2}
 \lim_{y\to\infty} \IP_{x_0}\big[\htau_1(y)<\infty\big] = \frac{1}{2}
\end{equation}
for any $x_0\in\Z^2$. Indeed, let~$\s_0=x_0$; since~$A$ is infinite,
by~\eqref{lim1/2} one can find~$y_0\in A$ and~$R_0$ such that 
$\{x_0,y_0\}\subset \B(R_0)$ and
\[
 \IP_{x_0}\big[\htau_1(y_0)<\htau_1(R_0)\big] \geq \frac{1}{3}.
\]
Then, for any~$x_1\in \partial\B(R_0)$, we can find~$y_1\in A$ 
and~$R_1>R_0$ such that 
$y_1\in \B(R_1)\setminus \B(R_0)$ and
\[
 \IP_{x_1}\big[\htau_1(y_1)<\htau_1(R_1)\big] \geq \frac{1}{3}.
\]
Continuing in this way, we can construct a sequence
$R_0<R_1<R_2<\ldots$ (depending on the set~$A$) such that,
for each $k\geq 0$, the walk~$\s$ hits~$A$ 
on its way from~$\partial\B(R_k)$ to~$\partial\B(R_{k+1})$
with probability at least~$\frac{1}{3}$,
regardless of the past. 
This clearly implies that~$A$ is a recurrent set.
\end{proof}

\begin{proof}[Proof of Theorem~\ref{t_Liouville}]
Indeed, Theorem~\ref{t_rec_trans} implies that 
every subset of~$\Z^2$ must be either recurrent or transient,
and then Proposition~3.8 in Chapter 2 of~\cite{R84}
implies the Liouville property. Still, for the reader's
convenience, we include the proof here.
 Assume that $h:\Z^2\setminus\{0\} \to \R$ is a bounded
harmonic function for~$\s$. Let us prove that 
\begin{equation}
\label{limit_infty}
 \liminf_{y\to \infty}h(y) = \limsup_{y\to \infty}h(y),
\end{equation}
that is, $h$ must have a limit at infinity. Indeed, 
assume that~\eqref{limit_infty} does not hold, which means that
there exist two constants~$b_1<b_2$ and
two \emph{infinite} sets $B_1,B_2\subset\Z^2$
such that $h(y)\leq b_1$ for all $y\in B_1$
and $h(y) \geq b_2$ for all $y\in B_2$.
Now, on one hand~$h(\s_n)$ is a bounded martingale, so
it must a.s.\ converge to some limit; on the other hand,
Theorem~\ref{t_rec_trans} implies that both~$B_1$ and~$B_2$
will be visited infinitely often by~$\s$, and so~$h(\s_n)$ 
cannot converge to any limit, thus yielding a contradiction.
This proves~\eqref{limit_infty}.

Now, if $\lim_{y\to \infty}h(y)=c$, then it is easy to 
obtain from the Maximum Principle that $h(x)=c$ for any~$x$.
This concludes the proof of Theorem~\ref{t_Liouville}.
\end{proof}

Finally, we are able to prove that there are ``big holes'' in the 
range of~$\s$:
\begin{proof}[Proof of Theorem~\ref{t_bigholes}]
Clearly, if~$G$ does not surround the origin in the sense of 
Definition~\ref{df_surround}, then $G\subset \B(c_1)\setminus \B(c_3)$.
For the sake of simplicity,
let us assume that $G\subset\B(1)\setminus \B(1/2)$;
the general case can be treated in a completely analogous way.

Consider the two sequences of events
\begin{align*}
 E_n &= \big\{\htau_1(2^{3n-1}G)>\htau_1(2^{3n}),
    \|\s_j\| > 2^{3n-1} \text{ for all }
       j\geq \htau_1(2^{3n})\big\},\\
   E'_n &= \big\{
    \|\s_j\| > 2^{3n-1} \text{ for all }
       j\geq \htau_1(2^{3n})\big\}    
\end{align*}
and note that $E_n\subset E'_n$ and
$2^{3n-1}G\cap \s_{[0,\infty)}=\emptyset$ on~$E_n$.
Our goal is to show that a.s.\ an infinite number of  
events $(E_n, n\geq 1)$ occurs. 
Observe, however, that the events in each of the 
above two sequences are \emph{not} independent,
so the ``basic'' second Borel-Cantelli lemma will not work.

 In the following, we use a generalization of the second 
Borel-Cantelli lemma, known as 
the Kochen-Stone theorem~\cite{KS64}:
it holds that
\begin{equation}
\label{Kochen-Stone}
 \IP\Big[\sum_{k=1}^\infty \1{E_k}=\infty\Big] \geq
  \limsup_{k\to\infty} \frac{\big(\sum_{i=1}^k\IP[E_i]\big)^2}
  {\sum_{i,j=1}^k\IP[E_i\cap E_j]}.
\end{equation}

We will now prove that there exists a positive constant~$c_4$
such that
\begin{equation}
\label{lower_En}
 \IP[E_n] \geq \frac{c_4}{n} \quad \text{ for all }n\geq 1.
\end{equation}
Indeed, since $G\subset\B(1)\setminus \B(1/2)$ does not
surround the origin, by comparison with Brownian motion
it is elementary to obtain that, for some~$c_5>0$,
\[
 \IP_x\big[\tau_1(2^{3n-1}G)>\tau_1(2^{3n}),
   \tau_1(0)>\tau_1(2^{3n}) \big] > c_5
\]
for all~$x\in\partial \B(2^{3(n-1)})$.
Lemma~\ref{l_relation_S_hatS} then implies that,
for some~$c_6>0$,
\begin{align}
\lefteqn{
 \IP_x\big[\htau_1(2^{3n-1}G)>\htau_1(2^{3n})\big] 
}\nonumber\\
&= \big(1+o(2^{-3n})\big) 
  \IP_x\big[\tau_1(2^{3n-1}G)>\tau_1(2^{3n})
   \mid \tau_1(0)>\tau_1(2^{3n})\big] \nonumber\\
&= \big(1+o(2^{-3n})\big) 
  \IP_x\big[\tau_1(2^{3n-1}G)>\tau_1(2^{3n}),
  \tau_1(0)>\tau_1(2^{3n})\big] 
 > c_6
\label{inv_pri>c}
\end{align}
for all~$x\in\partial \B(2^{3(n-1)})$.
Let us denote, recalling~\eqref{formula_for_a},
$\gamma^{*}=\frac{\pi}{2}\times\frac{1}{\ln 2} \times
\frac{2\gamma+ 3\ln 2}{\pi}
=\frac{2\gamma+ 3\ln 2}{2\ln 2}$.
Using~\eqref{escape_condS}, we then obtain
\begin{align}
 \IP_z\big[ \|\s_j\| > 2^{3n-1} \text{ for all }
       j\geq 0\big] 
& = 1- \frac{a(2^{3n-1})+O(2^{-3n})}{a(2^{3n})+O(2^{-3n})}
\nonumber\\
&= \frac{1}{3n+\gamma^{*}}\big(1+o(2^{-3n})\big).
\label{>c/n}
\end{align}
for any $z\in\partial \B(2^{3n})$.
The inequality~\eqref{lower_En} follows from~\eqref{inv_pri>c}
and~\eqref{>c/n}.

Now, we need an upper bound for $\IP[E_m\cap E_n]$,
$m\leq n$. Clearly, $E_m \cap E_n\subset E'_m \cap E'_n$,
and note that the event~$E'_m \cap E'_n$ means that 
the particle hits $\partial\B(2^{3n})$ before
$\partial\B(2^{3m-1})$ starting from a site on~$\partial\B(2^{3m})$,
and then never hits $\partial\B(2^{3n-1})$ starting from a site
 on~$\partial\B(2^{3n})$.
So, again using~\eqref{escape_condS} and 
Lemma~\ref{l_relation_S_hatS}, we write
analogously to~\eqref{>c/n}
(and also omitting a couple of lines of elementary calculations)
\begin{align}
 \IP[E_m \cap E_n] & \leq \IP[E'_m \cap E'_n]\nonumber\\
 & = \frac{(a(2^{3m-1}))^{-1}-(a(2^{3m}))^{-1}+O(2^{-3m})}
{(a(2^{3m-1}))^{-1}-(a(2^{3n}))^{-1}+O(2^{-3m})} 
\times \Big(1- \frac{a(2^{3n-1})+O(2^{-3n})}{a(2^{3n})+O(2^{-3n})}\Big)
\nonumber\\
 & = \frac{1}{(3(n-m)+1)(3m+\gamma^{*})}\big(1+o(2^{-3m})\big).
\label{EmEn_upper}
\end{align}

Now, \eqref{lower_En} implies that   
$\sum_{i=1}^k\IP[E_i]\geq c_{9}\ln k$,
and~\eqref{EmEn_upper} implies (again, 
after some elementary calculations) that 
$\sum_{i,j=1}^k\IP[E_i\cap E_j]\leq c_{10}\ln^2 k$.
So, using~\eqref{Kochen-Stone},
we obtain that 
\[
 \IP\Big[\sum_{k=1}^\infty \1{E_k}=\infty\Big] \geq c_{11}>0.
\]
Now, note that, 
again due to Proposition 3.8 in Chapter 2 of~\cite{R84}, 
the Liouville property implies that every tail event
must have probability~$0$ or~$1$, and so the probability
in the above display must be equal to~$1$.
This concludes the proof 
of Theorem~\ref{t_bigholes}.
\end{proof}

\section*{Acknowledgements}
The work of S.P.\ and M.V.\ was partially supported by
CNPq (grants 300886/2008--0 and 305369/2016--4) 
and FAPESP (grant 2017/02022--2). 
The authors are grateful to the anonymous referee 
for carefully reading the first version of this paper.

\end{document}